\newcommand{\leqnomode}{\tagsleft@true}
\newcommand{\reqnomode}{\tagsleft@false}
\DeclareMathOperator{\F}{\mathcal F}
\DeclareMathOperator{\N}{\mathbb N}
\DeclareMathOperator{\Z}{\mathbb Z}
\DeclareMathOperator{\calD}{\mathcal D}
\DeclareMathOperator{\calC}{\mathcal C}
\DeclareMathOperator{\Sp}{\EuScript{S}}
\DeclareMathOperator{\Hom}{\mathrm{Hom}}
\DeclareMathOperator{\Ind}{\mathrm{Ind}}
\DeclareMathOperator{\Res}{\mathrm{Res}}
\DeclareMathOperator{\Coind}{\mathrm{Coind}}
\DeclareMathOperator{\id}{\mathrm{Id}}
\newcommand{\hocolim}{\operatorname*{hocolim}}
\newcommand{\holim}{\operatorname*{holim}}
\numberwithin{equation}{section}
\numberwithin{equation}{subsection}
\renewcommand*{\theequation}{%
  \ifnum\value{subsection}=0 %
    \thesection
  \else
    \thesubsection
  \fi
  .\arabic{equation}%
}
\newtheorem{prop}[equation]{Proposition}
\newtheorem*{prop*}{Proposition}
\newtheorem{lmm}[equation]{Lemma}
\newtheorem{thm}[equation]{Theorem}
\newtheorem*{thm*}{Theorem}
\newtheorem{cor}[equation]{Corollary}
\theoremstyle{definition}
\newtheorem{dfn}[equation]{Definition}
\newtheorem{ntt}[equation]{Notation}
\newtheorem{exmp}[equation]{Example}
\newtheorem*{ntt*}{Notation}
\theoremstyle{remark}
\newtheorem{rmk}[equation]{Remark}
\title{Goodwillie tower of the norm functor}
\author{Nikolai Konovalov}
\address{Department of Mathematics, University of Notre Dame, 255 Hurley Hall, Notre Dame, IN 46556, USA}
\email{nkonoval@nd.edu}
\date{}
\begin{document}
\begin{abstract}
In this note we compare the fracture squares from genuine equivariant stable homotopy theory and the fracture squares which appear in the Goodwillie tower for the norm functor.
\end{abstract}
\maketitle
\section{Introduction}
Let $G$ be a finite group and $\EuScript{S}^G$ be the simplicial model monoidal category of orthogonal $G$-spectral constructed by M.~Mandell and P.~May~\cite{MandellMay02}. Recall that $\EuScript{S}^G$ is a model of the genuine equivariant stable homotopy category, i.e. the homotopy category of $\EuScript{S}^G$ is equivalent to the main construction of~\cite{LMS}. 

In~\cite{HHR16}, the symmetric monoidal structure on the category $\EuScript{S}^G$ was enriched with the structure of norms. Namely, let $H$ be a subgroup of $G$, then M.~A.~Hill, M.~J.~Hopkins, and D.~C.~Ravenel constructed {\it the norm functor} $N_H^G\colon \Sp^H \to \Sp^G$ which is symmetric monoidal and can be thought as a ``genuine stabilization" of the coinduction. In this paper, we study the Goodwillie tower of $N_H^G$.

More precisely, the norm functor is a homotopy functor acting between stable simplicial model categories. For studying those, T.~Goodwillie~\cite{GoodwillieIII} introduced the notion of $n$-exsicive functors for a given natural number $n$ ($1$-exsicive functors are also known as exact or linear.) He showed that any homotopy functor $F$ has the universal natural transformation into a $n$-exsicive functor for any $n$ (see~Theorem~\ref{existence of left adjoint} for a precise statement.) The universal functor is denoted by $P_nF$. 

Dually, R.~McCarthy~\cite{McCarthy99} showed that any homotopy functor has the universal natural transformation {\it from} a $n$-excisive functor (Corollary~\ref{existence of right adjoint}); we will denote this universal functor by $P^nF$. In this paper, we compute $P_nN_H^G$ and $P^nN_H^G$. 

\begin{thm*} Let $\F_n$ be the family of subgroups $K\subset G$ such that $|K\backslash G/H|>n$. Then $P_nN_H^G\simeq \widetilde{E}\F_n\wedge N_H^G$ and $P^nN_H^G\simeq F(\widetilde{E}\F_n, N_H^G)$.
\end{thm*}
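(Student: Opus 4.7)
The plan is to verify both formulas via universal properties, in each case checking excisiveness of the candidate functor and the appropriate universal behavior of the comparison map with $N_H^G$. Two inputs are fundamental. The first is a detection principle: a functor $F\colon\Sp^H\to\Sp^G$ is $n$-excisive if and only if $\Phi^K\circ F$ is $n$-excisive for every subgroup $K\le G$; this follows because each $\Phi^K$ is exact and the family $\{\Phi^K\}_K$ collectively detects equivalences, so cartesianness of cubes in $\Sp^G$ reduces to the case of ordinary spectra. The second input is the HHR geometric fixed-point formula
\[
\Phi^KN_H^G(X)\simeq\bigwedge_{KgH\in K\backslash G/H}\Phi^{K\cap gHg^{-1}}(c_g^*X),
\]
which presents $\Phi^KN_H^G$ as a smash product of $m:=|K\backslash G/H|$ exact, hence linear, functors of $X\in\Sp^H$. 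A standard cross-effect calculation then shows that a smash of $m$ linear functors between stable $\infty$-categories is $m$-homogeneous; in particular, $P_k$ of such a functor vanishes for $k<m$ and it is itself $n$-excisive for $n\ge m$.

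For $P_nN_H^G$, the argument splits into two steps. Step one shows that $\widetilde{E}\F_n\wedge N_H^G$ is $n$-excisive. By the detection principle it suffices to compute $\Phi^K$: for $K\in\F_n$ we have $\Phi^K\widetilde{E}\F_n=0$, and for $K\notin\F_n$ we have $\Phi^K\widetilde{E}\F_n\simeq S^0$ together with $|K\backslash G/H|\le n$, so $\Phi^KN_H^G$ is $n$-excisive by the second input. Step two shows that the fiber $E\F_{n+}\wedge N_H^G$ has $P_n=0$. Since $P_n$ commutes with homotopy colimits and with the exact functor $\Ind_K^G$, and $E\F_{n+}$ is built from cells satisfying $G/K_+\wedge(-)\simeq\Ind_K^G\Res_K^G(-)$ for $K\in\F_n$, it suffices to show $P_n\Res_K^GN_H^G=0$ for every $K\in\F_n$. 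Applying the detection principle to $\Res_K^GN_H^G$: for each $L\le K$ we have $|L\backslash G/H|\ge|K\backslash G/H|>n$, so $\Phi^LN_H^G$ is $|L\backslash G/H|$-homogeneous and its $P_n$ vanishes. Combining the two steps identifies $N_H^G\to\widetilde{E}\F_n\wedge N_H^G$ as a $P_n$-equivalence with an $n$-excisive target, giving $P_nN_H^G\simeq\widetilde{E}\F_n\wedge N_H^G$.

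The argument for $P^nN_H^G$ is formally dual. Excisiveness of $F(\widetilde{E}\F_n,N_H^G)$ follows by the same case analysis using the compatibility $\Phi^KF(A,X)\simeq F(\Phi^KA,\Phi^KX)$, valid for $A=\widetilde{E}\F_n$ since $G$ is finite and $\widetilde{E}\F_n$ is built from finitely many compact cells. For the universal property, apply $F(-,N_H^G)$ to the cofiber sequence $E\F_{n+}\to S^0\to\widetilde{E}\F_n$ to obtain a fiber sequence $F(\widetilde{E}\F_n,N_H^G)\to N_H^G\to F(E\F_{n+},N_H^G)$, so that it suffices to prove $\nat(E,F(E\F_{n+},N_H^G))=0$ for every $n$-excisive $E$. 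Via the Wirthm\"uller isomorphism $F(G/K_+,Y)\simeq\Ind_K^G\Res_K^GY$ and the $\Res\dashv\Ind$ adjunction, this reduces to $\nat(\Res_K^GE,\Res_K^GN_H^G)=0$ for $K\in\F_n$; by the same detection argument the Taylor tower of $\Res_K^GN_H^G$ vanishes below degree $|K\backslash G/H|>n$, so both $P_n\Res_K^GN_H^G$ and $P^n\Res_K^GN_H^G$ are zero, whence the natural transformation space vanishes. The most delicate steps will be justifying the $\Phi^K$-detection of $n$-excisiveness within the simplicial model framework of~\cite{MandellMay02} and the commutation of $\Phi^K$ with the internal mapping object out of $\widetilde{E}\F_n$; both are structural facts that need careful treatment in this specific model.
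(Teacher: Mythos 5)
Your first half (the formula for $P_nN_H^G$) is essentially correct and follows the same route as the paper: detection of $n$-excisiveness by geometric fixed points (Lemma~\ref{Phidetectsexcisiviness}) combined with the double coset/geometric fixed point description of $\Phi^K N_H^G$; your cell-by-cell induction over $E\F_{n+}$ replaces the paper's induction over families in Lemma~\ref{familiesandexcisiviness}, and both versions work because $\Phi^K$ and $\Ind_K^G$ commute with the colimit-built construction of $P_n$.

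The $P^n$ half, however, has a genuine gap in both of its key steps. First, the asserted compatibility $\Phi^KF(A,X)\simeq F(\Phi^KA,\Phi^KX)$ is false for $A=\widetilde{E}\F_n$: whenever $\F_n\neq\emptyset$ the complex $\widetilde{E}\F_n$ is infinite (it contains cells of $E\F_n$ in arbitrarily high dimensions), and $\Phi^K$ does not commute with the infinite homotopy limit $F(E\F_{n+},-)$. Concretely, for $G=C_p$, $H=e$, $n=1$, $K=C_p$ there is a fiber sequence $\Phi^{C_p}F(\widetilde{E}C_p,N_e^{C_p}X)\to X\to (X^{\wedge p})^{tC_p}$ whose second map is the Tate diagonal, so $\Phi^{C_p}F(\widetilde{E}C_p,N_e^{C_p}X)$ is not $X\simeq\Phi^{C_p}N_e^{C_p}X$; note also that if your commutation held, $F(\widetilde{E}\F_n,N_H^G)$ and $\widetilde{E}\F_n\wedge N_H^G$ would have the same geometric fixed points for all $K$, forcing $P^nN_H^G\simeq P_nN_H^G$, which already fails for $C_p$. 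The excisiveness of $F(\widetilde{E}\F_n,N_H^G)$ is true, but it needs the cross-effect input, not a $\Phi^K$ computation: condition (1) of Definition~\ref{norm-like}, i.e. the decomposition of Corollary~\ref{cross-effect of norm} into spectra induced from subgroups $K_j$ with $|K_j\backslash G/H|\geq i$, kills $cr_iF(\widetilde{E}\F_n,N_H^G)$ for $i>n$ (Lemma~\ref{goodwilliecotruncationnormlike}). Second, the step ``by the same detection argument both $P_n\Res_K^GN_H^G$ and $P^n\Res_K^GN_H^G$ are zero'' is unjustified: geometric fixed points commute with the colimits defining $P_n$ and hence detect $(n{+}1)$-reducedness, but they do not commute with the inverse limits defining $P^n$, and reduced does not imply coreduced in general (e.g. $X\mapsto(X^{\wedge 2})_{h\Sigma_2}$ is $2$-reduced but not $2$-coreduced). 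This asymmetry is exactly what the paper isolates — see the example $EC_{2+}\wedge\mathrm{triv}_e^{C_2}((-)^{\wedge 2})$ and the remark after Proposition~\ref{norm-like and powers} — and the needed coreducedness is obtained there from condition (2) of Definition~\ref{norm-like} via $\Sigma$-induced cross-effects and Corollary~\ref{cohomogeneous criteria}, fed into part (3) of Lemma~\ref{familiesandexcisiviness}. To repair your argument you must supply these induced-ness statements; the $\Phi^K$-detection principle alone cannot carry the co-excisive half.
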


Furthermore, N.~Kuhn used the interplay between $P_n$ and $P^n$ to inscribe the map $P_nF \to P_{n-1}F$ into a homotopy cartesian square~\cite[Proposition 1.9]{Kuhn04} (see also Proposition~\ref{fracturesquaregoodwillie}). One can also use the interplay between different families of subgroups in $G$ to obtain fracture squares of localization functors acting on $\Sp^G$. We show in Corollary~\ref{fracture square comparasing} that the analogue of Kuhn's squares for $N_H^G$ is the square of localization functors composed with $N_H^G$.

The paper is organized as follows. In Section~\ref{functor calculus}, we recall fundamental theorems and constructions from calculus of homotopy functors acting between stable simplicial model categories. 
In Section~\ref{esht}, we list the properties for the norm functor $N_H^G\colon \EuScript{S}^H \to \EuScript{S}^G$ and show that the results of the previous section apply to $N_H^G$. We also construct the fracture squares both in Goodwillie's setting and in the equivariant stable homotopy setting (Proposition~\ref{fracturesquaregoodwillie} and Corollary~\ref{fracturesquareequivariant}).

In Section~\ref{computations}, we compute approximations $P_n, P^n$ for the norm functors and as well as for $LN_H^G$, where $L\colon \Sp^G \to \Sp^G$ is a linear functor given by a composition of $A\wedge -$ and $F(A,-)$ for a $G$-CW complex $A$. We state our main results in Theorem~\ref{maintheorem} and in Corollary~\ref{fracture square comparasing}.

In the last section, we compute the Goodwillie tower $N_e^{C_{p^k}},k\geq 1$ using Theorem~\ref{maintheorem}, and show possible consequences of that computations for the divided power functor $\Gamma^n\colon \Sp \to \Sp, X\mapsto (X^{\wedge n})^{h\Sigma_n}$.

\section{Background}\label{background}
\subsection{Functor calculus}\label{functor calculus}

The idea of the functor calculus is established in the series of papers by T.~Goodwillie~\cite{GoodwillieIII}. He mostly worked with categories of spaces or spectra, however, there were several attempts to extend his approach to more general categories. Informally, the main result of all those generalizations is that the original ideas of T.~Goodwillie (and even the proofs) work in the new settings as well. The brief overview can be found in Kunh's notes~\cite{Kuhn07}. 
We also would like to point out the preprint by L.~A.~Pereira~\cite{Pereira13} for the case of functors between model categories, and the manuscript by J.~Lurie~\cite[Section 6.1]{HigherAlgebra} for functors between quasi-categories. 

In this section, we recall several basic notions and a few main results concerning calculus for functors between simplicial model categories, which we will require further. We refer the reader to~\cite[Chapter II.2]{Quillen67} for the definition of simplicial model category (see also~\cite[Chapter 9]{Hirschhorn03}).


Let $S$ be a finite set, and let $\mathcal{P}(S)$ be a poset of all subsets in $S$. We set $\mathcal{P}_0(S)=\mathcal{P}(S) - \{\emptyset\}$ and $\mathcal{P}_1(S)=\mathcal{P}(S) - \{S\}$.

\begin{dfn}
	Let $\calC$ be a simplicial model category.
\begin{enumerate}
	\item A {\it $d$-cube} $\mathcal{X}$ is a functor $\mathcal{X}\colon \mathcal{P}(S) \to \calC$, $|S|=d$.
	\item\label{cartesiancube} A $d$-cube $\mathcal{X}$ is called {\it homotopy cartesian} if the natural map $$\mathcal{X}(\emptyset)\to \holim_{T\in \mathcal{P}_0(S)}\mathcal{X}(T)$$ is a weak equivalence.
	\item\label{cocartesiancube} A $d$-cube $\mathcal{X}$ is {\it homotopy cocartesian} if the natural map $$\hocolim_{T\in \mathcal{P}_1(S)}\mathcal{X}(T)\to \mathcal{X}(S)$$ is a weak equivalence.
	\item A $d$-cube $\mathcal{X}$ is called {\it strongly cocartesian} if $\mathcal{X}|_{\mathcal{P}(T)} \colon \mathcal{P}(T) \to \calC$ is homotopy cocartesian for all $|T|\geq 2, T\subset S$.
\end{enumerate}
\end{dfn}

\begin{rmk}
	We would like to point out the abuse of notation in the definition above. A homotopy (co)limit can be (and should be) defined in any relative category (i.e. a category $\calC$ equipped with a class of weak equivalences $\mathcal{W}$, cf.~\cite{DHKS}.) However, in that case it is naturally only an object of the homotopy category $\mathrm{Ho}(\calC)= \calC[\mathcal{W}^{-1}]$, but not $\calC$ itself. Nevertheless, in the case of simplicial model category one can choose {\it a functorial} lift into $\calC$, e.g. see~\cite[Chapter 18]{Hirschhorn03}. Everywhere in this paper by a homotopy (co)limit we would mean that particular lift. 
\end{rmk}

\begin{rmk}
	The natural maps in~\ref{cartesiancube},~\ref{cocartesiancube} factorise through the ordinary (co)limit.
\end{rmk}

Let $F\colon \calC \to \calD$ be a functor between simplicial model categories. Recall that $F$ is called a {\it homotopy} functor, if $F$ preserves weak equivalences. Denote by $hF\colon \mathrm{Ho}(\calC) \to \mathrm{Ho}(\calD)$ the functor between homotopy categories induced by $F$.

\begin{dfn}
	A homotopy functor $F$ is called {\it $n$-excisive} if it maps strongly cocartesian $(n+1)$-cubes into homotopy cartesian cubes.
\end{dfn}

One of the main result of~\cite[Theorem 1.8]{GoodwillieIII} is that any homotopy functor $F\colon \mathcal{C} \to \mathcal{D}$ has an universal natural transformation $F \to P_nF$, where $P_nF$ is a $n$-excisive functor.

\begin{thm}\label{existence of left adjoint}
Let $F\colon \calC \to \calD$ be a homotopy functor between simplicial model categories. Then there exists a homotopy $n$-excisive functor $P_nF\colon \calC \to \calD$ and a natural transformation $p_n\colon F \to P_nF$ that is universal up to weak equivalence among these. I.e. for any $n$-excisive functor $G$ and a natural transformation $p'_n\colon F \to G$ there exists unique $\eta\colon hP_nF \to hG$ such that the following diagram commutes 
$$
\begin{gathered}[b]
\begin{tikzcd}
	hF \arrow{rd}[swap]{p'_n} \arrow{r}{p_n} &
	hP_nF \arrow{d}{\eta}\\
 &
hG.
\end{tikzcd}\\[-\dp\strutbox]
\end{gathered}\eqno\qed
$$
\end{thm}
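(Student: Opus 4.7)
The plan is to adapt Goodwillie's original construction of the $n$-excisive approximation from the category of spectra to an arbitrary simplicial model category $\calC$, following the model-categorical framework of Pereira~\cite{Pereira13} or the quasi-categorical treatment in~\cite[\S 6.1]{HigherAlgebra}. The key enabling feature is that the simplicial structure on $\calC$ lets one tensor any cofibrant object with a finite simplicial set, which substitutes for the join construction used in Goodwillie's original argument.

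First I would construct, for each cofibrant $X\in\calC$, a functorial strongly cocartesian $(n+1)$-cube $\mathcal C_X\colon \mathcal P(S)\to\calC$ with $|S|=n+1$, having $\mathcal C_X(\emptyset)\simeq X$ and $\mathcal C_X(\{s\})$ weakly contractible for each $s\in S$, built using simplicial cones on $X$. Then set
$$T_nF(X) \;=\; \holim_{\emptyset\neq U\subseteq S} F(\mathcal C_X(U)),$$
so that the vertex $U=\emptyset$ supplies a natural transformation $t_n\colon F\to T_nF$. Finally define
$$P_nF(X) \;=\; \hocolim\bigl(F(X)\xrightarrow{t_n} T_nF(X)\xrightarrow{T_nt_n} T_n^2F(X)\to\cdots\bigr),$$
and let $p_n\colon F\to P_nF$ be the inclusion of the initial stage. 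Up to functorial cofibrant replacement this extends to all of $\calC$.

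Next I would verify that $P_nF$ is $n$-excisive. Goodwillie's key lemma asserts that for any strongly cocartesian $(n+1)$-cube $\mathcal Y$, the deviation of $T_nF(\mathcal Y)$ from being homotopy cartesian is strictly smaller, in a controlled numerical sense, than that of $F(\mathcal Y)$. Iterating $T_n$ and commuting the sequential homotopy colimit past the finite homotopy limit for the cube then forces the cube $P_nF(\mathcal Y)$ to be homotopy cartesian, which is precisely $n$-excisiveness. Universality is then formal: if $G$ is $n$-excisive, then $G\xrightarrow{\sim} T_nG$ by definition, so $G\simeq T_n^kG$ for all $k$, and any natural transformation $p'_n\colon F\to G$ factors through each $T_n^kF$ compatibly, hence through $P_nF$ in the homotopy category; the factorization $\eta\colon hP_nF\to hG$ is unique because the transfinite composite presentation of $P_nF$ is determined by the data.

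The main obstacle will be the key lemma quantifying how $T_n$ improves the cartesianness of cubes. In Goodwillie's original setting this rests on the Blakers--Massey theorem and explicit connectivity estimates; in a general simplicial model category one either needs an analogue of connectivity (available, e.g., when $\calC$ is stable, as is the case of interest $\calC=\Sp^H,\calD=\Sp^G$), or one must phrase the construction abstractly as a left Bousfield localization at the class of maps witnessing $n$-excision and verify that the localization exists. In the stable situation the analysis simplifies considerably because every strongly cocartesian cube is automatically homotopy cartesian up to a canonical shift, so the connectivity bookkeeping collapses and the commutation of $\hocolim$ with $\holim$ over a finite diagram becomes straightforward.
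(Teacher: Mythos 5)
The paper gives no proof of this theorem at all: it is quoted from Goodwillie~\cite[Theorem 1.8]{GoodwillieIII} (in the simplicial-model-category generality surveyed by Kuhn and Pereira), and your construction $P_nF=\hocolim_k T_n^kF$ with $T_nF(X)=\holim_{\emptyset\neq U\subseteq S}F(\mathcal C_X(U))$ built from fiberwise cones/joins is exactly that standard argument, so you are following essentially the same route as the cited source. One correction worth recording: the key lemma you defer (Goodwillie's Lemma 1.9, that for a strongly cocartesian cube $\mathcal Y$ the map $F(\mathcal Y)\to T_nF(\mathcal Y)$ factors through a homotopy cartesian cube) is a purely formal argument with joins and does not rest on Blakers--Massey or connectivity estimates even unstably; the point that genuinely needs a hypothesis in a general simplicial model category is only the commutation of the sequential homotopy colimit defining $P_nF$ with the finite homotopy limits over the punctured cube.
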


Recall that a category is {\it pointed} if it has an initial object~$\emptyset$, a terminal object~$\ast$, and the unique map $\emptyset \to \ast$ is an isomorphism. The simplicial model category $\mathcal{C}$ is called {\it stable} if $\calC$ is pointed and a $2$-cube $\mathcal{X}$ in $\calC$ is homotopy cocartesian if and only if $\mathcal{X}$ is homotopy cartesian, cf.~\cite{Hovey99}. R.~McCarthy~\cite{McCarthy99} discovered that the dual statement of Theorem~\ref{existence of left adjoint} is also true provided $\mathcal{C}$ and $\mathcal{D}$ are stable. We summarize his result in the following corollary. 

\begin{cor}\label{existence of right adjoint}
Let $F\colon \calC \to \calD$ be a homotopy functor between simplicial stable model categories. There exists a homotopy $n$-excisive functor $P^nF\colon \calC \to \calD$ and a natural transformation $p^n\colon P^nF \to F$ that is universal up to weak equivalence among these. I.e. for any $n$-excisive functor $G$ and a natural transformation $p^n_1\colon G \to F$ there exists unique $\eta\colon hG \to hP^nF$ such that the following diagram commutes
$$
\begin{tikzcd}
	hG \arrow{d}[swap]{\eta} \arrow{r}{p^n_1} &
	hF \\
	hP^nF \arrow[swap]{ru}{p^n}.&
\end{tikzcd}
$$
\end{cor}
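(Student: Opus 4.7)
The plan is to deduce the corollary from Theorem~\ref{existence of left adjoint} by the opposite-category trick. The first step is to observe that if $\calC$ is a stable simplicial model category then so is $\calC^{op}$: the model structure on $\calC^{op}$ is obtained by swapping cofibrations and fibrations, and the simplicial enrichment passes to $\calC^{op}$ with the simplicial tensor and cotensor interchanged. Stability is manifestly self-dual, since a square is homotopy cocartesian in $\calC$ iff it is homotopy cartesian in $\calC^{op}$. A second, essential observation is that in a stable simplicial model category, a $d$-cube is homotopy cartesian iff it is homotopy cocartesian for every $d \geq 2$. For $d = 2$ this is the definition of stability; for larger $d$ it follows from the fact that the total homotopy fiber and total homotopy cofiber of a cube agree up to a shift in a stable setting, so both vanish simultaneously. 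Consequently, the class of strongly cocartesian cubes in $\calC$ coincides with the class of strongly cocartesian cubes in $\calC^{op}$.

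Given a homotopy functor $F\colon \calC \to \calD$, I would consider the induced functor $F^{op}\colon \calC^{op} \to \calD^{op}$, which is again a homotopy functor. Applying Theorem~\ref{existence of left adjoint} to $F^{op}$ yields an $n$-excisive homotopy functor $P_n(F^{op})$ together with a universal natural transformation $p_n^{op}\colon F^{op} \to P_n(F^{op})$. I would then define
$$P^nF := \bigl(P_n(F^{op})\bigr)^{op}\colon \calC \to \calD, \qquad p^n := (p_n^{op})^{op}\colon P^nF \to F.$$
Under the identifications above, $n$-excisiveness of $P_n(F^{op})$ on $\calC^{op}$ (send strongly cocartesian $(n+1)$-cubes to homotopy cartesian ones) translates directly into $n$-excisiveness of $P^nF$ on $\calC$.

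For the universal property, suppose $G\colon \calC \to \calD$ is $n$-excisive and $p^n_1\colon G \to F$ is a natural transformation. Then $G^{op}$ is $n$-excisive on $\calC^{op}$, and $(p^n_1)^{op}\colon F^{op} \to G^{op}$ is a transformation out of $F^{op}$. By the universal property of $P_n(F^{op})$ from Theorem~\ref{existence of left adjoint}, there is a unique $\eta^{op}\colon hP_n(F^{op}) \to hG^{op}$ making the triangle commute; passing to opposites produces the desired unique $\eta\colon hG \to hP^nF$ fitting into the diagram in the statement.

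The step requiring the most care is the verification that $\calC^{op}$ genuinely satisfies the hypotheses under which Theorem~\ref{existence of left adjoint} was proved, since the simplicial model structure on the opposite category is sometimes treated as folklore. This reduces to checking that Goodwillie's construction of $P_n$ (which uses only homotopy limits over $\mathcal{P}_0$ of finite sets and the simplicial mapping spaces) is carried out in a framework insensitive to the swap of tensor and cotensor; once this is granted, the remaining content of the corollary is formal. The identification of cartesian and cocartesian cubes of arbitrary dimension in the stable setting is the other delicate point, but it is a standard consequence of stability via the total (co)fiber comparison.
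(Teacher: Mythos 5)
Your proposal is correct and follows essentially the same route as the paper: the paper's proof is exactly the opposite-category argument, invoking Theorem~\ref{existence of left adjoint} together with the fact that in a stable simplicial model category a cube is homotopy cartesian if and only if it is homotopy cocartesian. Your additional verifications (the simplicial model structure on $\calC^{op}$ and the total fiber/cofiber comparison for cubes of dimension $d\geq 2$) simply make explicit what the paper leaves as standard facts.
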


\begin{proof}
	By taking the opposite categories, it follows from Theorem~\ref{existence of left adjoint} and the fact that for stable categories a $n$-cube is homotopy cartesian if and only if it is homotopy cocartesian.
\end{proof}



\begin{dfn}
Let $F\colon \calC \to \calD$ be a homotopy functor between simplicial stable model categories such that $F(\ast)=\ast$. 
\begin{enumerate}
\item $F$ is $k$-reduced, if $P_{k-1}F\simeq \ast$;
\item $F$ is $k$-coreduced, if $P^{k-1}F \simeq \ast$;
\item $F$ is $k$-homogeneous, if $F$ is $k$-excisive and $k$-reduced;
\item $F$ is linear, if $F$ is $1$-homogeneous (equivalently, $1$-excisive).
\end{enumerate}

\end{dfn}

Recall the T.~Goodwillie classification of $k$-homogeneous functors~\cite[Theorem 3.5]{GoodwillieIII}. 

\begin{dfn}\label{crosseffect}
	Let $\calC,\calD$ be simplicial stable model categories, and let $F\colon \calC \to \calD$ be a homotopy functor, $F(\ast)=\ast$. We define $cr_kF\colon \calC^{\times k} \to \calD$, {\it the $k$-th cross-effect}, to be the functor of $k$ variables given by
	$$cr_kF(X_1,\ldots,X_k)= \mathrm{hofib}\Big(F\big(\coprod_{i\in[k]}X_i\big) \to \holim_{T\in \mathcal{P}_0(k)}F\big(\coprod_{i\in[k]-T}X_i\big)\Big). $$
\end{dfn}

Note that $cr_kF(X,\ldots, X)\in \calD$ has a natural action of the symmetric group $\Sigma_k$. Therefore, we can consider $cr_kF(X,\ldots, X)$ as an object of the diagram category $\calD^{B\Sigma_k}$.

\begin{thm}
Let $F\colon \calC \to \calD$ be a $k$-homogeneous functor between stable simplicial model categories. Then $F(X)\simeq cr_kF(X,\ldots,X)_{h\Sigma_k}$.
\end{thm}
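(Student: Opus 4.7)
The plan is to construct a natural comparison map
$$\tau_F(X)\colon cr_kF(X,\ldots,X)_{h\Sigma_k} \to F(X),$$
and show it is a weak equivalence when $F$ is $k$-homogeneous. The map itself is easy to produce: by Definition~\ref{crosseffect}, the cross-effect maps canonically to $F\bigl(\coprod_{i}X_i\bigr)$ as the apex of the defining homotopy fiber sequence; setting all $X_i=X$ and post-composing with $F$ applied to the fold $\coprod_{i\in[k]}X\to X$ yields a $\Sigma_k$-equivariant morphism with trivial action on the target, which descends to $\tau_F$ after passing to homotopy $\Sigma_k$-orbits.

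The core of the argument is to verify that both sides define $k$-homogeneous functors of $X$ and then check $\tau_F$ is a weak equivalence. The right-hand side is $k$-homogeneous by hypothesis. For the left-hand side I would rely on two classical observations. First, when $F$ is $k$-excisive the multi-variable functor $cr_kF$ is symmetric multilinear: a strongly cocartesian $(k+1)$-cube built from the existing coproduct summands together with a pushout in one chosen variable is turned by $F$ into a cartesian cube, and the iterated fiber defining $cr_kF$ extracts the desired linearity in each slot. Second, the diagonal $X\mapsto L(X,\ldots,X)$ of any symmetric multilinear $L$ is $k$-excisive (a strongly cocartesian $(k+1)$-cube evaluated slot-by-slot into $L$ produces a cartesian cube after restriction to the diagonal) and $k$-reduced (multilinearity makes $L$ vanish once any argument is contractible). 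Taking homotopy $\Sigma_k$-orbits preserves $k$-homogeneity in the stable setting because $(-)_{h\Sigma_k}$ is a colimit along a fixed $B\Sigma_k$-shaped diagram which, by stability of $\calD$, commutes with the relevant finite homotopy limits.

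With both sides confirmed $k$-homogeneous, Theorem~\ref{existence of left adjoint} reduces the claim that $\tau_F$ is a weak equivalence to the statement that it induces an equivalence on $k$-th cross-effects. The key auxiliary lemma is that for any symmetric multilinear functor $L$ of $k$ variables the natural map
$$L(X_1,\ldots,X_k) \to cr_k\bigl(L(-,\ldots,-)_{h\Sigma_k}\bigr)(X_1,\ldots,X_k)$$
is a weak equivalence: multilinearity expands $L$ evaluated on $\coprod_{i\in S}X_i$ in each slot as a sum indexed by functions $[k]\to S$; the homotopy fiber from Definition~\ref{crosseffect} isolates exactly those summands indexed by surjections, i.e. bijections; and the $\Sigma_k$-action on these bijections is free, so taking homotopy orbits recovers $L(X_1,\ldots,X_k)$. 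Applied to $L=cr_kF$ and chased through the construction, this identifies $cr_k(\tau_F)$ with the identity up to equivalence.

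The main obstacle is precisely this last lemma: one must commute $(-)_{h\Sigma_k}$ past the iterated homotopy fiber defining $cr_k$ and track the $\Sigma_k$-action through the combinatorial decomposition indexed by maps $[k]\to S$. Stability of $\calC$ and $\calD$ is essential here, since it allows the interchange of finite homotopy limits with the colimit computing $(-)_{h\Sigma_k}$ and makes the homotopy fiber in Definition~\ref{crosseffect} behave additively on the multilinear decomposition; the remaining work is bookkeeping of the symmetric group action, which is the heart of T.~Goodwillie's original argument.
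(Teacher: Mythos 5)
Your sketch is correct and is essentially the classical Goodwillie classification argument: the comparison map induced by the fold $\coprod_{i\in[k]}X\to X$, multilinearity of $cr_k$ for $k$-excisive functors, detection of equivalences between $k$-homogeneous functors on $k$-th cross-effects, and the computation of $cr_k\bigl((\Delta^*L)_{h\Sigma_k}\bigr)\simeq L$ for symmetric multilinear $L$ via the free $\Sigma_k$-action on the bijection summands. The paper gives no independent proof --- it simply cites Kuhn's notes (Theorem 5.12), i.e.\ Goodwillie's original argument --- so your route coincides with the one the paper relies on.
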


\begin{proof} See~\cite[Theorem 5.12]{Kuhn07}.
\end{proof}

\begin{prop}\label{diagonal is coreduced}
	Let $H\colon \calC^{\times k} \to \calD$ be a $k$-multilinear functor (i.e. $H$ is linear in each variable). Then the composition $\calC \xrightarrow{\Delta} \calC^{\times k} \xrightarrow{H} \calD$ is both a $k$-reduced and $k$-coreduced functor, where $\Delta$ is given by $X\mapsto (X,\ldots,X)$.
\end{prop}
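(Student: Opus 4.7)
The plan is to prove that $H\Delta$ is $k$-reduced by showing it is in fact $k$-homogeneous, and then to obtain the $k$-coreduced statement by a duality argument. The opposite functor $H^{\mathrm{op}}\colon(\calC^{\mathrm{op}})^{\times k}\to\calD^{\mathrm{op}}$ remains $k$-multilinear (in the stable simplicial setting linearity is self-dual, since cartesian and cocartesian squares coincide), and by Corollary~\ref{existence of right adjoint} the operation $P^{k-1}$ on $F$ is identified with $P_{k-1}$ on $F^{\mathrm{op}}$. So once we establish the $k$-reducedness of any $k$-multilinear composition with the diagonal, the opposite version applied to $H^{\mathrm{op}}$ yields $P^{k-1}(H\Delta)\simeq *$, and the whole problem reduces to $P_{k-1}(H\Delta)\simeq *$.

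First I would compute the cross-effects of $H\Delta$. Expanding $H(\bigvee_{i\in S}X_i,\ldots,\bigvee_{i\in S}X_i)$ by multilinearity (using that linear functors between stable categories preserve finite wedges, which agree with finite products) gives, for any finite pointed set $S$, a wedge over all functions $f\colon[k]\to S$ of terms $H(X_{f(1)},\ldots,X_{f(k)})$. The total fiber that defines $cr_{|S|}(H\Delta)$ retains only those $f$ whose image is all of $S$. For $|S|=k+1$ there are no such surjections, so $cr_{k+1}(H\Delta)\simeq *$ and $H\Delta$ is $k$-excisive, i.e.\ $P_k(H\Delta)\simeq H\Delta$. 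For $|S|=k$ the surjective $f$'s are precisely the permutations, giving
$$cr_k(H\Delta)(X_1,\ldots,X_k)\simeq\bigvee_{\sigma\in\Sigma_k}H(X_{\sigma(1)},\ldots,X_{\sigma(k)}),$$
a $k$-multilinear functor with $\Sigma_k$ permuting the summands.

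Since $H\Delta$ is $k$-excisive, the Taylor fiber sequence $D_k(H\Delta)\to H\Delta\to P_{k-1}(H\Delta)$ reduces the problem to showing the comparison map $D_k(H\Delta)\to H\Delta$ is an equivalence. By the classification of $k$-homogeneous functors recorded above, $D_k(H\Delta)(X)\simeq cr_k(H\Delta)(X,\ldots,X)_{h\Sigma_k}$; on the diagonal this reads $((\Sigma_k)_+\wedge H\Delta(X))_{h\Sigma_k}\simeq H\Delta(X)$, because the homotopy coinvariants of the regular representation are trivial. The Taylor-layer map $D_k(H\Delta)\to H\Delta$ itself factors through the inclusion of $cr_k(H\Delta)$ into $H\Delta(X^{\vee k})$ followed by $H\Delta$ applied to the fold $X^{\vee k}\to X$; on the diagonal this becomes the summing map $\bigvee_\sigma H\Delta(X)\to H\Delta(X)$ and induces the same equivalence on coinvariants. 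Hence $P_{k-1}(H\Delta)\simeq *$.

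The main obstacle will be precisely this last step: the rigorous identification of the canonical Taylor-tower map $D_k(H\Delta)\to H\Delta$ with the equivalence built from the cross-effect computation. This requires a careful unpacking of the universal property underlying Goodwillie's classification of $k$-homogeneous functors, so that the abstract inclusion of the top layer is matched with the concrete fold used above.
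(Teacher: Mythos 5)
Your duality step (coreducedness from reducedness via opposite categories, using that multilinearity and $\Delta$ are self-dual in the stable setting) is exactly the paper's argument. The divergence is in the $k$-reduced part: the paper disposes of it by citing Goodwillie's Lemma~3.2 of \cite{GoodwillieIII}, which states precisely that $P_{k-1}(H\circ\Delta)\simeq\ast$ for a $k$-multilinear $H$, whereas you attempt to reprove it, and as written your reproof has two genuine gaps. First, the inference ``$cr_{k+1}(H\Delta)\simeq\ast$, hence $H\Delta$ is $k$-excisive'' is not valid: vanishing of the $(k+1)$-st cross-effect does not imply $k$-excisiveness, even for reduced homotopy functors between stable categories. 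For instance the connective-cover functor $\tau_{\geq 0}\colon\Sp\to\Sp$ is reduced and preserves finite wedges, so all its cross-effects $cr_m$, $m\geq 2$, vanish, yet it is not $1$-excisive (it does not commute with $\Omega$). The conclusion you want is true, but it must come from multilinearity directly --- e.g.\ Goodwillie's result that a functor which is $n_i$-excisive in the $i$-th variable becomes $(\sum_i n_i)$-excisive after composing with the diagonal --- not from cross-effect vanishing.

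Second, the step you yourself flag as the ``main obstacle'' is not a technicality but the substance of the claim. Knowing that $D_k(H\Delta)(X)$ and $H\Delta(X)$ are abstractly equivalent (both being $\big(\bigvee_{\sigma\in\Sigma_k}H(X,\ldots,X)\big)_{h\Sigma_k}$) does not show that the canonical map $D_k(H\Delta)\to H\Delta$ is an equivalence; and since in the stable setting that map is an equivalence if and only if $P_{k-1}(H\Delta)\simeq\ast$, the argument is circular unless the map is identified by independent means. What would close it is the natural form of the classification of homogeneous functors --- the fiber sequence $(cr_kF\circ\Delta)_{h\Sigma_k}\to F\to P_{k-1}F$ for $k$-excisive reduced $F$, with first map the fold-map composite you describe --- but that statement is at least as deep as the lemma you are trying to reprove. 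The efficient repair is to do what the paper does: quote Goodwillie's Lemma~3.2 for $k$-reducedness and keep your duality argument for $k$-coreducedness.
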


\begin{proof}
The composition $H\circ \Delta$ is $k$-reduced by~\cite[Lemma~3.2]{GoodwillieIII}. 

For a functor $F\colon \calC \to \calD$ we set $F^{op}\colon \calC^{op} \to \calD^{op}$ to be the same functor but acting between opposite categories. Since $\calC,\calD$ are stable, $F$ is $k$-coreduced if and only if $F^{op}$ is $k$-reduced. Now, the statement follows from facts that $H^{op}$ is also a multilinear functor and $(H\circ\Delta)^{op}=H^{op}\circ \Delta$.
\end{proof}

Recall that the forgetful functor $\calD^{B\Sigma_k}\to \calD$ has the left adjoint $\calD \to \calD^{B\Sigma_k}$ given by $Y\mapsto \coprod_{g\in\Sigma_k}Y$, where $\Sigma_k$ acts on the right hand side by permuting components.  An objects of $\calD^{B\Sigma_k}$ is called {\it $\Sigma_k$-induced}, if it belongs to the essential image of the left adjoint functor. 

\begin{cor}\label{cohomogeneous criteria}
Let $F\colon \calC \to \calD$ be a $k$-homogeneous functor. If the $k$-th cross-effect $cr_kF(X,\ldots,X)\in \calD^{B\Sigma_k}$ is $\Sigma_k$-induced for all $X\in\calC$, then $F$ is also $k$-coreduced.
\end{cor}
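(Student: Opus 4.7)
The strategy is to combine Goodwillie's classification $F(X)\simeq cr_kF(X,\ldots,X)_{h\Sigma_k}$ with Proposition~\ref{diagonal is coreduced} applied to the multilinear cross-effect, and the fact that $P^{k-1}$ preserves finite products.

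First, I would recall that the $k$-th cross-effect of a $k$-homogeneous functor is itself $k$-multilinear, a standard output of Goodwillie's analysis. Proposition~\ref{diagonal is coreduced} then shows that the diagonal composite $X\mapsto cr_kF(X,\ldots,X)$, viewed as a plain functor $\calC\to\calD$ by forgetting the $\Sigma_k$-action, is $k$-coreduced.

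Next, I would exploit the inducedness hypothesis by writing $cr_kF(X,\ldots,X)\simeq\coprod_{g\in\Sigma_k}G(X)$ naturally in $X$, for some homotopy functor $G\colon\calC\to\calD$. Two identifications then follow. Taking homotopy orbits kills the free $\Sigma_k$-action and recovers the generator $G$, so Goodwillie's classification gives $F\simeq G$ as homotopy functors. On the other hand, after forgetting the action, the coproduct coincides with the product in the stable category $\calD$, giving $cr_kF(X,\ldots,X)\simeq G(X)^{\times k!}$ as a functor $\calC\to\calD$. Combining this with the previous step yields
$$\ast\simeq P^{k-1}(cr_kF\circ\Delta)\simeq P^{k-1}\bigl(G^{\times k!}\bigr).$$
By Corollary~\ref{existence of right adjoint}, the functor $P^{k-1}$ is right adjoint to the inclusion of $(k-1)$-excisive functors, hence preserves finite products, so the right-hand side equals $(P^{k-1}G)^{\times k!}$. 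A finite product of objects in a pointed category is contractible only when each factor is, so $P^{k-1}G\simeq\ast$ and therefore $P^{k-1}F\simeq\ast$, as desired.

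The main obstacle is to make the decomposition $cr_kF(X,\ldots,X)\simeq\coprod_{\Sigma_k}G(X)$ natural in $X$, so that $G$ is in fact a homotopy functor to which $P^{k-1}$ can be applied. The cleanest way, which I would adopt, is to interpret the hypothesis as saying that the functor $X\mapsto cr_kF(X,\ldots,X)$ with values in $\calD^{B\Sigma_k}$ lands, up to natural weak equivalence, in the essential image of the free functor $\calD\to\calD^{B\Sigma_k}$; one then recovers $G$ as $cr_kF(X,\ldots,X)_{h\Sigma_k}$, which gives $G$ a canonical homotopy-functor structure and simultaneously identifies $G\simeq F$.
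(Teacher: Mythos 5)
Your opening step (the cross-effect is $k$-multilinear, so Proposition~\ref{diagonal is coreduced} gives $P^{k-1}(cr_kF\circ\Delta)\simeq\ast$) matches the paper, but the way you feed the inducedness hypothesis into it is where the argument has a genuine gap. The corollary only assumes that each \emph{object} $cr_kF(X,\ldots,X)\in\calD^{B\Sigma_k}$ is $\Sigma_k$-induced; your proof needs the stronger statement that a splitting $cr_kF(X,\ldots,X)\simeq\coprod_{g\in\Sigma_k}G(X)$ can be chosen \emph{naturally} in $X$, because you apply $P^{k-1}$ to the identification $cr_kF\circ\Delta\simeq G^{\times k!}$ of functors $\calC\to\calD$. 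There is no canonical comparison map between an object $Y\in\calD^{B\Sigma_k}$ and the induced object on $Y_{h\Sigma_k}$ (in either direction), so pointwise inducedness does not by itself yield such a natural equivalence, and your proposed remedy --- reading the hypothesis as saying the functor lands in the essential image of the free functor \emph{up to natural equivalence} --- strengthens the hypothesis rather than proving the statement as written. (The rest of your endgame is fine: $P^{k-1}$ does preserve finite products by its universal property, and a retract of a contractible functor is contractible.)

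The paper's proof avoids the naturality problem entirely: the comparison (norm) map from homotopy orbits to homotopy fixed points is natural in the $\Sigma_k$-object and is an equivalence whenever that object is induced, so the pointwise hypothesis already gives a natural equivalence $F\simeq (cr_kF\circ\Delta)_{h\Sigma_k}\simeq (cr_kF\circ\Delta)^{h\Sigma_k}$; one then concludes because $k$-coreduced functors are closed under homotopy limits, the homotopy fixed points being a homotopy limit over $B\Sigma_k$ of a diagram whose underlying functor is the $k$-coreduced functor $cr_kF\circ\Delta$. If you replace your coproduct/product decomposition by this natural norm-map comparison (trading ``$P^{k-1}$ preserves finite products'' for ``coreduced functors are closed under homotopy limits''), the gap closes and your argument becomes essentially the paper's.
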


\begin{proof}
	Since the functor $cr_kF(X_1,\ldots,X_k)$ is $k$-multilinear, the $k$-th cross-effect $cr_kF(X,\ldots,X)$ is $k$-coreduced by Proposition~\ref{diagonal is coreduced}. Since $cr_kF(X,\ldots,X)$ is $\Sigma_k$-induced, we have that 
	$$F(X)\simeq cr_kF(X,\ldots,X)_{h\Sigma_k}\simeq cr_kF(X,\ldots,X)^{h\Sigma_k}. $$
	We finish the proof by the fact that $k$-coreduced functors are closed under passage to homotopy limits.
\end{proof}

\begin{exmp}
	Let $\calC=\calD=\Sp$ be the category of spectra, then $X\mapsto (X^{\wedge n})_{h\Sigma_n}$ is $n$-excisive and $n$-reduced; $X\mapsto (X^{\wedge n})^{h\Sigma_n}$ is $n$-excisive and $n$-coreduced; and $X\mapsto X^{\wedge n}$ is $n$-excisive, and both $n$-reduced and $n$-coreduced.
\end{exmp}




Finally, we recall the fracture squares which appears in functor calculus. Set $L_k=\mathrm{cofib}(P^k\to \id)$ and let denote by $L_I$ the composition $L_{k-1}P_n$, where $I=[k;n]\subset \N, k>0$. For consistence, we also denote $P_n$ by $L_I$, where $I=[0;n]$.

\begin{prop}\label{fracturesquaregoodwillie}
Let $\calC,\calD$ be simplicial stable model categories. Let $0\leq k < m \leq n$, and set $I=[k;n], I_2 = [k;m-1], I_1=[m;n]$. Then for any homotopy functor $F\colon \calC\to\calD$ the following commutative diagram is homotopy cartesian:
$$
\begin{tikzcd}
L_IF \arrow{r} \arrow{d} &
L_{I_1}F \arrow{d}\\
L_{I_2}F \arrow{r} &
L_{I_2}L_{I_1}F.
\end{tikzcd}
$$
\end{prop}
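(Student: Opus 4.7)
The plan is to reduce the statement to an elementary homotopy cartesian square by applying the functorial operation $L_{k-1}$. Setting $X = P_n F$ and using $P_{m-1} P_n F \simeq P_{m-1} F$ from the universal property of $P_{m-1}$, it suffices to prove that for any homotopy functor $X\colon \calC \to \calD$, the square
$$
\begin{tikzcd}
L_{k-1} X \arrow{r} \arrow{d} & L_{m-1} X \arrow{d} \\
L_{k-1} P_{m-1} X \arrow{r} & L_{k-1} P_{m-1} L_{m-1} X
\end{tikzcd}
$$
is homotopy cartesian, where by convention $L_{-1} = \id$ in the case $k = 0$.

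First I would establish a base cartesian square. Applying $P_{m-1}$---which in the stable setting is a sequential homotopy colimit and therefore preserves (co)fiber sequences---to the defining cofiber sequence $P^{m-1} X \to X \to L_{m-1} X$ yields a cofiber sequence $P^{m-1} X \to P_{m-1} X \to P_{m-1} L_{m-1} X$, where the first term is identified via $P_{m-1} P^{m-1} X \simeq P^{m-1} X$ (because $P^{m-1} X$ is already $(m-1)$-excisive). The resulting map of cofiber sequences has an equivalence in its leftmost column, so its right square
$$
\begin{tikzcd}
X \arrow{r} \arrow{d} & L_{m-1} X \arrow{d} \\
P_{m-1} X \arrow{r} & P_{m-1} L_{m-1} X
\end{tikzcd}
$$
is homotopy cartesian in the stable setting.

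Next I would apply $L_{k-1} = \mathrm{cofib}(P^{k-1} \to \id)$ to this base square. The functor $P^{k-1}$ preserves cartesian squares in the stable setting (as a sequential homotopy limit), and cofibers of maps between cartesian squares are again cartesian (stable cartesian squares being closed under cofibers); hence $L_{k-1}$ preserves cartesian squares. The result is a cartesian square with corners $L_{k-1} X$, $L_{k-1} L_{m-1} X$, $L_{k-1} P_{m-1} X$, and $L_{k-1} P_{m-1} L_{m-1} X$. It remains to identify $L_{k-1} L_{m-1} X \simeq L_{m-1} X$: applying $P^{k-1}$ to the cofiber sequence $P^{m-1} X \to X \to L_{m-1} X$ and using $P^{k-1} P^{m-1} X \simeq P^{k-1} X$ (both are the universal $(k-1)$-excisive approximation from below to $X$, since $k-1 \leq m-1$) makes the first map an equivalence, forcing $P^{k-1} L_{m-1} X \simeq \ast$ and therefore $L_{k-1} L_{m-1} X \simeq L_{m-1} X$.

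The main obstacle is the collection of preservation lemmas invoked throughout---that $P_j$ and $P^j$ commute with (co)fiber sequences and with cartesian squares in the stable setting---but these follow from their construction as sequential homotopy colimits (resp.\ limits) in the stable case, together with the equivalence of cartesian and cocartesian squares in stable.
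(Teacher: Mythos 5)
Your argument is correct, and it organizes the proof differently from the paper. The paper proves cartesianness in one step by comparing the homotopy fibers of the two horizontal maps: it identifies the fiber of $L_IF\to L_{I_1}F$ with $P^{m-1}L_{k-1}P_nF$ and the fiber of $L_{I_2}F\to L_{I_2}L_{I_1}F$ with $L_{k-1}P_{m-1}P^{m-1}L_{k-1}P_nF$, and observes that the comparison map is an equivalence because $P^{m-1}G\to P_{m-1}P^{m-1}G$ is one (together with the coreducedness of $P^{m-1}L_{k-1}P_nF$, which is left implicit). You instead decompose the problem: you first prove what is exactly the $k=0$ case of the proposition (the Kuhn-type square for $X=P_nF$) by mapping the cofiber sequence $P^{m-1}X\to X\to L_{m-1}X$ to its $P_{m-1}$-image and using $P_{m-1}P^{m-1}X\simeq P^{m-1}X$, and then you deduce the general case by applying $L_{k-1}$, using that $P^{k-1}$, and hence $L_{k-1}$, preserves homotopy cartesian squares in the stable setting, plus the corner identifications $L_{k-1}L_{m-1}X\simeq L_{m-1}X$ (via $P^{k-1}P^{m-1}\simeq P^{k-1}$) and $L_{k-1}P_{m-1}P_nF\simeq L_{k-1}P_{m-1}F$. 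Both proofs rest on the same standard commutation facts (exactness of $P_j$ and $P^j$ in the stable setting, $P_{m-1}P^{m-1}\simeq P^{m-1}$, $P^{k-1}P^{m-1}\simeq P^{k-1}$), so neither is more elementary; the paper's version is shorter, while yours isolates the base fracture square and makes explicit where stability and the preservation lemmas enter, at the cost of the extra (routine) lemma that $L_{k-1}$ preserves cartesian squares. One point to tidy up, at the same level of informality as the paper itself: you should note that your corner identifications are compatible with the natural transformations appearing in the proposition's square (e.g.\ that $L_{k-1}X\to L_{k-1}L_{m-1}X\simeq L_{m-1}X$ agrees with the canonical map $L_{k-1}X\to L_{m-1}X$), which follows from naturality of $\id\to L_{k-1}$ applied to $X\to L_{m-1}X$.
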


\begin{proof} It is enough to show that the induced map between the homotopy fibers of the horizontal maps is an equivalence. The fiber of the upper one is $$P^{m-1}L_{k-1}P_nF,$$ 
and the fiber the lower map is the composition
$$L_{k-1}P_{m-1}P^{m-1}L_{k-1}P_nF.$$
Now, the proposition follows by the fact that the map $P^{m-1}G \to P_{m-1}P^{m-1}G$ is an equivalence for any homotopy functor $G$.
\end{proof}

\begin{exmp}\label{tate-excisive}
Fix $n\in \N$. Let $F\colon \Sp \to \Sp, F(\ast)=\ast$ be a homotopy functor which commutes with filtered homotopy colimits. Set $D_nF=\mathrm{fib}(P_nF \to P_{n-1}F)$ to be the $n$-th homogeneous layer of $F$. One can show that $D_nF(X)\simeq(\partial_nF\wedge X^{\wedge n})_{h\Sigma_n}$ for some Borel $\Sigma_n$-equivariant spectrum $\partial_nF\in \Sp^{B\Sigma_n}$.

We now apply Proposition~\ref{fracturesquaregoodwillie} with $k=0$ and $m=n$. Then $L_I=P_n, L_{I_2}=P_{n-1}$, and $L_{I_1}F(X)\simeq L_{n-1}P_nF(X)\simeq L_{n-1}D_nF(X)\simeq (\partial_nF\wedge X^{\wedge n})^{h\Sigma_n}$. As a result, we obtain Kuhn's homotopy cartesian square~\cite[Proposition 1.9]{Kuhn04}:
$$
\begin{tikzcd}
P_nF \arrow{r} \arrow{d} &
(\partial_nF\wedge (-)^{\wedge n})^{h\Sigma_n} \arrow{d}\\
P_{n-1}F \arrow{r} &
(\partial_nF\wedge (-)^{\wedge n})^{t\Sigma_n}.
\end{tikzcd}
$$	
Here $(-)^{t\Sigma_n}$ is the Tate fixed points under $\Sigma_n$-action (see ibid.)
\end{exmp}

\subsection{Equivariant stable homotopy theory}\label{esht}

Let $G$ be a finite group. Recall first several categories which appear naturally in studying topological spaces with $G$-action. Denote by $\mathrm{Top}^G$ (resp.\ $\mathrm{Top}^G_{*}$) the category of compactly generated weak Hausdorff (resp.\ pointed) $G$-spaces and $G$-equivariant (resp.\ pointed) continuous maps. We call a map $f\colon X\to Y \in \mathrm{Top}^G$ a {\it weak $G$-equivalence} if $f^H\colon X^H \to Y^H$ is a weak equivalence on $H$-fixed points for all subgroups $H\subset G$. 

We denote by $\underline{\EuScript{S}}_G$ the $\mathrm{Top}^G_*$-enriched category of orthogonal $G$-spectra constructed by M.~Mandell and P.~May, cf.~\cite[Definition 2.6]{MandellMay02}. Similarly, $\EuScript{S}^G$ is the topological category of orthogonal $G$-spectra and $G$-equivariant maps, i.e $\EuScript{S}^G(X,Y)=\underline{\EuScript{S}}_G(X,Y)^G$. In~\cite{MandellMay02}, categories $\underline{\EuScript{S}}_G$, $\EuScript{S}^G$ are denoted by $\mathscr{J}_G\mathscr{S}$, $G\mathscr{J}\mathscr{S}$, respectively; in this paper, we use the notation of~\cite[Definition A.14]{HHR16}.

Recall that $\underline{\EuScript{S}}_G,\EuScript{S}^G$ are closed symmetric monoidal categories with respect to the smash product $\wedge$, see~\cite[Corollary~3.2]{MandellMay02} and~\cite[Proposition~A.17]{HHR16}. Moreover, $\underline{\EuScript{S}}_G$ is tensored and cotensored over the category $\mathrm{Top}^G_{*}$; we denote the mapping spectrum as $F(K,X),K\in\mathrm{Top}^G_{*},X\in\EuScript{S}^G$. Finally, we note that $\underline{\EuScript{S}}_G,\EuScript{S}^G$ are complete and cocomplete.

Let $H\subset G$ be a subgroup of $G$. Recall here several basic properties of the norm functor $\widetilde{N}_H^G\colon\EuScript{S}^H \to \EuScript{S}^G$, cf.~\cite[Section A.4]{HHR16}.

\begin{prop}\label{properties of the norm functor}$\quad$
\begin{enumerate}
	\item Let $X$ be a $H$-set, then $\widetilde{N}_H^G\Sigma^{\infty}_+X\cong \Sigma^{\infty}_+\Coind_H^GX$.
\item Let $V$ be a $H$-representation, then $\widetilde{N}_H^GS^{-V}\cong S^{-\Ind_H^GV}$.
\item $\widetilde{N}_H^G$ commutes with sifted colimits.
\end{enumerate}
\end{prop}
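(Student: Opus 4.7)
The plan is to deduce all three statements from the explicit construction of the norm functor in~\cite[Section A.3]{HHR16} as an indexed smash product: for $X\in\EuScript{S}^H$ one can write, informally,
$$\widetilde{N}_H^G X \;\cong\; \bigwedge_{gH\in G/H} g_{*}X,$$
where each $g_{*}X$ is the $H$-spectrum $X$ transported along a choice of coset representative and $G$ permutes the factors through its action on $G/H$ (the honest definition goes through the indexed monoidal product of orthogonal $H$-spectra).

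For part~(1) I would use the symmetric monoidality of $\Sigma^{\infty}_{+}\colon \Top^{G}\to\EuScript{S}^{G}$, namely $\Sigma^{\infty}_{+}(A\times B)\cong\Sigma^{\infty}_{+}A\wedge \Sigma^{\infty}_{+}B$, together with the compatibility of $\Sigma^{\infty}_{+}$ with the indexing. Applied to an $H$-set $X$, the indexed smash product over $G/H$ of copies of $\Sigma^{\infty}_{+}X$ becomes $\Sigma^{\infty}_{+}$ of the indexed Cartesian product of copies of $X$ over $G/H$, which is precisely $\Sigma^{\infty}_{+}\Coind_{H}^{G}X$ by the definition of coinduction. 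For part~(2), I would combine the indexed smash product formula with the standard identity $S^{-V_1}\wedge S^{-V_2}\cong S^{-(V_1\oplus V_2)}$ to obtain
$$\widetilde{N}_{H}^{G}S^{-V}\cong \bigwedge_{gH\in G/H}S^{-g_{*}V}\cong S^{-\bigoplus_{gH\in G/H}g_{*}V}\cong S^{-\Ind_{H}^{G}V},$$
where the last isomorphism is the classical description of the induced representation as a direct sum of conjugates indexed by cosets, and the $G$-action on both sides manifestly agrees.

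For part~(3), I would argue that $\widetilde{N}_{H}^{G}$ is built out of operations all of which commute with sifted colimits. Concretely, the $n$-fold smash product on $\EuScript{S}^{G}$ preserves sifted colimits in each variable (it is a left adjoint in each variable, and moreover the standard model of orthogonal spectra computes smash products levelwise up to an enriched left Kan extension, which commutes with sifted colimits); since sifted colimits by definition commute with finite products, the diagonal of a functor of $[G:H]$ variables that preserves sifted colimits in each entry still preserves sifted colimits, yielding the claim. The main obstacle is bookkeeping: making the indexed smash product identification and the equivariant structures on both sides of~(1) and~(2) precise enough to invoke these formal properties, rather than anything conceptually hard — once the indexed smash product formula is on the table, each claim reduces to a short symmetric-monoidal computation.
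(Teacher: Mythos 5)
Your proposal is correct and takes essentially the same route as the paper: for part (1) the paper's argument is exactly yours (symmetric monoidality of $\Sigma^{\infty}_{+}$ intertwining the indexed product defining $\Coind_H^G$ with the indexed smash product defining the norm), while for parts (2) and (3) the paper simply cites \cite[Propositions A.59 and A.53]{HHR16}, whose proofs are precisely the computation with negative spheres $S^{-V}=F_VS^0$ and the sifted-colimit/diagonal argument you sketch. No gaps worth flagging, beyond noting that the justification for part (3) is really the finality of the diagonal $I\to I^{[G:H]}$ for sifted $I$, which is equivalent to the commutation with finite products you invoke.
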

\begin{proof}
	The first part follows because $\Sigma^{\infty}_+\colon \mathrm{Top}^H \to \EuScript{S}^H$ is a symmetric monoidal functor. The second part is~\cite[Proposition A.59]{HHR16}. For the last part we refer to ibid, Proposition A.53.
\end{proof}

Recall that a map $f\colon X\to Y\in\EuScript{S}^G$ is called {\it stable weak equivalence} if $f$ induces an isomorphism on stable homotopy groups $\pi_k^H$ for all $k\in\Z$ and all $H\subset G$, cf.~\cite[Definition~2.17]{HHR16}. There are different approaches to enhance $\EuScript{S}^G$ with a model structure; for studying the norm functor the most suitable one is the {\it positive complete} model structure of section B.4 in~\cite{HHR16}. Namely, $\widetilde{N}_H^G$ is not a homotopy functor (so we can not apply the theory of the previous section), but it will be after precomposing with a functorial cofibrant replacement.

\begin{prop}\label{norm functor is left derivable}
The norm functor $\widetilde N_H^G$ preserves weak equivalences between cofibrant objects in positive complete model structure on $\EuScript{S}^H$.
\end{prop}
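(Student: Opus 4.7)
The plan is to invoke Ken Brown's lemma: a functor between model categories that sends acyclic cofibrations between cofibrant objects to weak equivalences automatically preserves all weak equivalences between cofibrant objects. Thus the proposition reduces to checking that $\widetilde N_H^G$ takes acyclic cofibrations between cofibrant objects in the positive complete structure to stable weak equivalences in $\EuScript{S}^G$.

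Next I would reduce from arbitrary cofibrations between cofibrant objects to the generating (acyclic) cofibrations. Every such cofibration is a retract of a transfinite composition of pushouts of generating (acyclic) cofibrations. Because $\widetilde N_H^G$ preserves sifted (in particular filtered) colimits by Proposition~\ref{properties of the norm functor}(3), and because weak equivalences in $\EuScript{S}^G$ are stable under filtered colimits, it suffices to treat a single pushout along a generating acyclic cofibration, provided we know that $\widetilde N_H^G$ sends generating cofibrations to cofibrations in $\EuScript{S}^G$ (so that the pushouts one forms after applying $\widetilde N_H^G$ are in fact homotopy pushouts). The one non-filtered input here is the pushout square itself, which I would handle by the standard trick: writing the pushout as a two-sided bar construction (a sifted colimit) or, equivalently, by invoking the fact that the norm, being strong symmetric monoidal and preserving coproducts of cofibrant objects in a controlled way, commutes up to equivalence with pushouts along cofibrations between cofibrant objects.

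It then remains to verify the claim on generating (acyclic) cofibrations. These have the form $G_+\wedge_H (\partial D^n \hookrightarrow D^n)_+\wedge S^{-V}$, with $V$ an $H$-representation satisfying a positivity condition ($V^H\neq 0$), together with the corresponding acyclic maps. Using the monoidality of $\widetilde N_H^G$ together with Proposition~\ref{properties of the norm functor}(1),(2), one computes
\[
\widetilde N_H^G\!\bigl(G_+\wedge_H K\wedge S^{-V}\bigr)\;\cong\;(\Coind_H^G G)_+\wedge K'\wedge S^{-\Ind_H^G V},
\]
where $K'$ is the resulting based $G$-CW complex built from $K$. The positivity of $V$ implies positivity of $\Ind_H^G V$, so the image is again built from generating cofibrations of the positive complete model structure on $\EuScript{S}^G$; in particular it is a cofibration, and in the acyclic case the extra $\{0\}_+\hookrightarrow [0,1]_+$ factor persists after applying the norm and produces a stable equivalence.

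The main obstacle is the acyclic case: turning a formal cell-by-cell analysis into an honest stable equivalence in $\EuScript{S}^G$. This is exactly the reason for working with the \emph{positive} complete model structure rather than the naive one: without positivity, $\widetilde N_H^G$ would not send a generating acyclic cofibration to an equivalence because taking $H$-fixed points of $S^{-V}$ at $V^H=0$ is badly behaved, and the cellular argument above would break down at the level of $H$-fixed spectra. Once positivity is in place, the stable equivalence at each cell follows from the explicit formula above combined with the fact that induction of representations preserves the positivity condition.
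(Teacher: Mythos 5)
The paper does not give an independent argument here: it simply cites \cite[Proposition B.103]{HHR16}. Your proposal tries to reprove that result by the standard model-categorical machine (Ken Brown's lemma, then cell induction over generating (acyclic) cofibrations, transfinite compositions and pushouts), and this is where it breaks. That machine requires the functor to carry the colimits used to build relative cell complexes to (homotopy) colimits, and $\widetilde N_H^G$ does not: it preserves \emph{sifted} colimits, but wedges and pushouts are not sifted, and the norm genuinely fails to preserve them. Indeed, Proposition~\ref{decomposition of norm} of this very paper shows that $N_H^G(X_1\vee X_2)$ is a wedge of induced indexed smash products $\Ind^G_{G_\lambda}\widetilde F_\lambda$, not $N_H^GX_1\vee N_H^GX_2$; since a wedge is a pushout over the point, the assertion that the norm ``commutes up to equivalence with pushouts along cofibrations between cofibrant objects'' is false. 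The bar-construction trick does not rescue this: applying $\widetilde N_H^G$ levelwise to $B(Y,A,X)$ does not yield the bar construction of the normed objects, because each level is a wedge. So the reduction ``it suffices to treat a single pushout of a generating acyclic cofibration'' has no justification, and this is precisely the hard point of \cite[\S B]{HHR16}.

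Two further inaccuracies compound this. The generating (acyclic) cofibrations of the positive complete structure on $\EuScript{S}^H$ are induced from subgroups $K\subset H$, of the shape $H_+\wedge_K\bigl(S^{-V}\wedge(\partial D^n\subset D^n)_+\bigr)$ with $V^K\neq 0$ (your formula with $G_+\wedge_H$ is not even an object of $\EuScript{S}^H$), and the norm of such an induced object is again governed by a distributive law rather than by the displayed monoidality formula. More importantly, what one actually needs is control of $\widetilde N_H^G$ applied to a cell attachment $A\to B$, i.e.\ the filtration of $\widetilde N_H^GB$ with associated graded built from indexed smash powers of $B/A$; establishing that this filtration consists of cofibrations and that it interacts well with equivalences is the content of Hill--Hopkins--Ravenel's Propositions B.89/B.96, and their proof of B.103 then proceeds via this filtration together with geometric fixed points (the diagonal formula $\Phi^G\widetilde N_H^G\simeq\Phi^H$ and isotropy separation), not via a naive cell-by-cell argument. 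Your observation about why \emph{positivity} is needed is a correct intuition, but as written the proof has a genuine gap at the non-additivity of the norm; the honest fix is either to reproduce the HHR filtration argument or, as the paper does, to cite \cite[Proposition B.103]{HHR16}.
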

\begin{proof}
See~\cite[Proposition B.103]{HHR16}
\end{proof}

Let us fix $Q\colon \EuScript{S}^H\to \EuScript{S}^H$ a functorial cofibrant replacement. We will denote by $N_H^G$ the composition $\widetilde{N}_H^G\circ Q$. Proposition~\ref{properties of the norm functor} still holds for $N_H^G$, but we have to replace isomorphisms with (chains of) weak equivalences.

Finally, we recall stratification phenomena in $\EuScript{S}^G$. Let $\F$ be a family of subgroups of $G$  that is a set of subgroups closed under passage to subgroups and conjugates. Then we define a $G$-CW complex $E\F$ by the property:  
$$
E\F^{K}\simeq \left\{\begin{array}{ll}
		\emptyset, & K\notin \F; \\
\{\mathrm{pt}\}, & K\in\F. \end{array}\right. 
$$
By the Elmendorf theorem~\cite{Elmendorf83}, a space $E\F$ exists and it is unique up to a weak $G$-equivalence; and so by the equivariant Whitehead theorem it is unique up to homotopy equivalence. Let denote by $\widetilde{E}\F\in \mathrm{Top}^G_{*}$ the mapping cone of the collapse map $E\F_{+}\to S^0$. Note that if $\F$ is empty, then $E\F\simeq \emptyset$, $\widetilde{E}\F\simeq S^0$; in opposite, if $\F$ is the family of all subgroups, then $E\F,\widetilde{E}\F$ are contractible.





Let $X,Y$ be transitive $G$-sets; we will say that $X\geq Y$ if $\Hom_G(X,Y)\neq \emptyset$. Note that it defines the partial order on the set of isomorphism classes of transitive $G$-sets. We will denote the corresponding poset by $\mathcal{O}_G$. This poset has the maximal element $G/e$ and the minimal element $G/G$.

Recall that a subset $I\subset \mathcal{O}$ in a poset $\mathcal{O}$ is {\it an interval} if for any $x\geq z\geq y\in \mathcal{O},x,y\in I$ the element $z$ belongs to $I$. If an interval $I\subset \mathcal{O}_G$ contains the maximal element $G/e$ one can associate a non-empty famiy of subgroups $\mathcal{F}_I$ by the following rule $$H\in \F_I \iff G/H\in I. $$
We remark that this correspondence defines a bijections between the set of non-empty families of subgroups and the set of intervals in $\mathcal{O}_G$ which contain $G/e$. We extend it to the set of all families by sending an empty interval $I\subset \mathcal{O}_G$ to the empty family $\F_{\emptyset}$.

Finally, note that any interval $I$ is a complement of two intervals $I=I'\setminus I''$, where $G/e \in I'$, and $I''$ either also contains $G/e$, or it is empty. Furthermore, such presentation is unique.

\begin{dfn}
Let $I\subset \mathcal{O}_G$ be an interval, $I=I'\setminus I''$, $G/e \in I'$, and $I''$ is either empty, or $G/e\in I''$. The functor $L_I\colon \Sp^G \to \Sp^G$ given by the rule:
$$L_I(X)=F((E\F_{I'})_{+}, \widetilde{E}\F_{I''}\wedge X) $$
is called \it{the localization functor associated with $I$}.
\end{dfn}

\begin{rmk} Since $(E\F_{I'})_{+},\widetilde{E}\F_{I''}$ are $G$-CW complexes, the functor $L_I$ is homotopy, i.e. it preserves stable weak equivalences. Notice that the induced functor $hL_I\colon \mathrm{Ho}(\EuScript{S}^G) \to \mathrm{Ho}(\EuScript{S}^G)$ is indeed a localization functor, i.e. $hL_I$ applying to the natural transformation $\id \to hL_I$ is an isomorphism. 
\end{rmk}

Let $K\subset G$ be a subgroup. Let denote by $\Phi^K\colon \EuScript{S}^G \to \EuScript{S}$ the {\it geometric fixed points} functor. Recall that it can be defined by the formula
$$\Phi^K(X)=(\widetilde{E}{\mathcal{P}}_K\wedge\Res^G_KX)^K,$$
where $\mathcal{P}_K$ is the family of all proper subgroups in $K$, and $(-)^K$ is the categorical fixed points (precomposed with the fixed functorial fibrant replacement). Recall that $\Phi^K$ preserves smash products up to weak equivalence, $\Phi^K\Sigma^{\infty}X\simeq\Sigma^{\infty}X^K$, and it commutes with finite homotopy limits and any homotopy colimits, cf.~\cite[Section B.10]{HHR16}. 

\begin{prop}\label{jointly conservative} The map $f\colon X\to Y$ is a stable weak equivalence if and only if $\Phi^Kf$ is a weak equivalence for all subgroups $K\subset G$.
\end{prop}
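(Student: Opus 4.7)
The forward direction is routine: the geometric fixed points $\Phi^K$ are the composite of restriction $\Res^G_K$, smashing with the $G$-CW complex $\widetilde{E}\mathcal{P}_K$, and categorical $K$-fixed points of a fibrant replacement, each of which preserves stable weak equivalences in the positive complete model structure. So I would just cite~\cite[Section B.10]{HHR16} for this.

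For the nontrivial converse I would induct on $|G|$, the case $G = e$ being trivial since then $\Phi^e$ is computed from the underlying non-equivariant spectrum (up to a fibrant replacement) and $\pi_k^e = \pi_k \circ \Phi^e$. For the inductive step, fix $f\colon X\to Y$ with $\Phi^K f$ a weak equivalence for every $K\subset G$. I need to show $\pi_k^H f$ is an isomorphism for all $H\subset G$ and $k\in\Z$. For proper $H$, note that $\Phi^K$ (for $K\subset H$) and $\pi_k^K$ are insensitive to the ambient group, i.e.\ they factor through $\Res^G_H$; so the inductive hypothesis applied to $\Res^G_H f\in\EuScript{S}^H$ gives $\pi_k^K f$ an isomorphism for every $K\subsetneq G$.

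It then remains to handle $H = G$. Here I would use the isotropy separation cofiber sequence
$$E\mathcal{P}_{+}\wedge X \longrightarrow X \longrightarrow \widetilde{E}\mathcal{P}\wedge X,$$
where $\mathcal{P}$ is the family of proper subgroups of $G$, and compare it for $X$ and $Y$. Applying $\pi_*^G$ and the five lemma to the resulting long exact sequences, it suffices to show that $\pi_*^G$ of $f$ smashed with the two outer terms is an isomorphism. For the right term, the definition of $\Phi^G$ gives $\pi_*^G(\widetilde{E}\mathcal{P}\wedge X) \cong \pi_*\Phi^G X$ (after the implicit fibrant replacement), and this is an isomorphism by hypothesis. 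For the left term, I would filter $E\mathcal{P}$ by its $G$-CW skeleta, so that the associated graded is built from wedges of cells of the form $G/H_{+}\wedge S^n$ with $H\in\mathcal{P}$; since $\pi_k^G(G/H_{+}\wedge X) \cong \pi_k^H X$ by the Wirthmüller isomorphism and $\pi_k^H f$ is already known to be an isomorphism for proper $H$, the skeletal spectral sequence (or a five-lemma induction on skeleta, interchanged with the filtered colimit) gives the required isomorphism on $\pi_*^G(E\mathcal{P}_{+}\wedge-)$.

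The main obstacle is keeping the bookkeeping clean: one has to be honest about the fibrant replacements implicit in $(-)^K$ and $\Phi^K$, about the commutation of $\Phi^K$ with $\Res^G_H$ for $K\subset H\subset G$, and about passage to the filtered colimit over skeleta of $E\mathcal{P}$ (which is fine since $\pi_*^G$ commutes with filtered homotopy colimits of spectra indexed by a sequential diagram of cofibrations). Once these points are in place, the induction and five-lemma go through mechanically.
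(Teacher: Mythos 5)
Your argument is correct. The paper itself gives no proof of this proposition beyond the citation to Schwede (Proposition 3.3.10), and your proof is essentially the standard argument given in that reference: induction on $|G|$, reduction to proper subgroups via $\Res^G_H$ and the compatibility $\Phi^K\circ\Res^G_H\simeq\Phi^K$ for $K\subset H$, and the isotropy separation sequence $E\mathcal{P}_+\wedge X\to X\to\widetilde{E}\mathcal{P}\wedge X$ at the top group, where $\pi_*^G(\widetilde{E}\mathcal{P}\wedge-)$ is identified with $\pi_*\Phi^G(-)$ (this identification is where the fibrant replacement implicit in the paper's definition of $\Phi^G$ is used, as you note) and $\pi_*^G(E\mathcal{P}_+\wedge f)$ is handled by cell induction over the skeleta of $E\mathcal{P}$ together with the Wirthm\"uller isomorphism $\pi_k^G(G/H_+\wedge X)\cong\pi_k^H X$ and passage to the sequential colimit. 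The only thing your write-up adds beyond the citation is the bookkeeping you already flag (derived versus point-set fixed points, commutation of $\pi_*^G$ with the skeletal colimit), and those points are handled correctly, so nothing is missing.
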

\begin{proof} See~\cite[Proposition 3.3.10]{Schwede18}
\end{proof}
\begin{cor}\label{basic fracture}
	Let $\F$ be a family of subgroups in $G$. Then for any $X\in\EuScript{S}^G$ the map $E\F_+\wedge X \to E\F_+\wedge F(E\F_{+},X)$ is an equivalence. \qed
\end{cor}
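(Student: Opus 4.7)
My plan is to reduce the statement to a geometric fixed point computation via Proposition~\ref{jointly conservative}. Writing $\eta_X\colon X \to F(E\F_+, X)$ for the map induced by the collapse $E\F_+ \to S^0$, the map in question is $\id_{E\F_+}\wedge\eta_X$, and by Proposition~\ref{jointly conservative} it suffices to show that $\Phi^K(\id_{E\F_+}\wedge\eta_X)$ is a weak equivalence for every subgroup $K\subset G$. Using that $\Phi^K$ is symmetric monoidal up to weak equivalence and satisfies $\Phi^K\Sigma^\infty A \simeq \Sigma^\infty A^K$, both sides rewrite, up to equivalence, as a smash product of $\Sigma^\infty(E\F^K)_+$ with $\Phi^K X$ and with $\Phi^K F(E\F_+,X)$ respectively.

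I would then split into two cases. If $K \notin \F$ then $E\F^K = \emptyset$, so $(E\F^K)_+$ is the one-point pointed space and $\Sigma^\infty(E\F^K)_+ \simeq \ast$; the source and target are then both contractible, and the map is trivially an equivalence. If $K \in \F$, the closure of $\F$ under subgroups gives $E\F^L \simeq \{\mathrm{pt}\}$ for every $L\subset K$, so by the equivariant Whitehead theorem applied inside $\mathrm{Top}^K$ the restriction of $E\F$ to a $K$-space is $K$-equivariantly contractible. Hence $E\F_+ \to S^0$ is a pointed $K$-homotopy equivalence; since $F(-,X)$ restricts properly from $G$ to $K$, this forces $X \to F(E\F_+,X)$ to be a weak equivalence of $K$-spectra. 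Applying $\Phi^K$ and smashing with $\Phi^K E\F_+\simeq S$ finishes the case.

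The main technical obstacle I anticipate is the book-keeping around the interaction of $\Phi^K$ with smash products and with the cotensor $F(E\F_+,-)$, as well as with restriction from $G$- to $K$-spectra. This is handled by choosing $E\F$ to be a $G$-CW complex (hence sufficiently cofibrant) and invoking the formal properties of $\Phi^K$ recalled just before Proposition~\ref{jointly conservative}; once those commutation statements are in place, the argument collapses to the two-line case analysis above.
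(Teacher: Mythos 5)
Your argument is correct and is exactly the proof the paper intends: the corollary is stated with no written proof precisely because it follows from Proposition~\ref{jointly conservative} by checking geometric fixed points, with the same case split ($K\notin\F$ gives contractible source and target since $(E\F)^K=\emptyset$; $K\in\F$ gives that $E\F_+\to S^0$ is a $K$-equivalence, so $X\to F(E\F_+,X)$ is a $K$-equivalence). Your write-up just makes explicit the bookkeeping the paper leaves implicit.
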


\begin{rmk}
Notice that Corollary~\ref{basic fracture} can be reformulated as the square
$$
\begin{tikzcd}
X \arrow{r} \arrow{d} &
F((E\F)_{+},X) \arrow{d}\\
\widetilde{E}\F\wedge X \arrow{r} &
\widetilde{E}\F\wedge F((E\F)_{+},X).
\end{tikzcd}
$$
is homotopy cartesian, cf.~\cite{GreenleesMay}. In the proposition below, we show that similar squares consisting of more general functors $L_I$ are also homotopy cartesian.
\end{rmk}

\begin{prop}\label{fracturesquareequivariant}
	Let $I_1, I_2\subset \mathcal{O}_G$ be intervals such that $x_1>x_2$ for all $x_1\in I_1, x_2\in I_2$, and $I=I_1\sqcup I_2$ is also an interval. Then the following commutative diagram in the category of endofunctors of $\Sp^G$ is homotopy cartesian:
$$
\begin{tikzcd}
L_I \arrow{r} \arrow{d} &
L_{I_1} \arrow{d}\\
L_{I_2} \arrow{r} &
L_{I_2}L_{I_1}.
\end{tikzcd}
$$
\end{prop}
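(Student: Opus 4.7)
The plan is to reduce the proposition to the basic fracture square of Corollary~\ref{basic fracture}, applied with a well-chosen family to a well-chosen object, and then to apply $F((E\F_1)_+,-)$ to propagate the cartesian property.

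The first step is to parametrize the three intervals. Writing $I_\F=\{G/H:H\in\F\}$, one checks that there is a unique chain of families $\F_1\supseteq\F_2\supseteq\F_3$ of subgroups of $G$ (with $\F_1\neq\emptyset$; $\F_2$ and $\F_3$ possibly empty) such that
\[ I=I_{\F_1}\setminus I_{\F_3},\quad I_1=I_{\F_2}\setminus I_{\F_3},\quad I_2=I_{\F_1}\setminus I_{\F_2}. \]
Unpacking the definition of $L_I$, the four functors appearing in the square become
\begin{align*}
L_I(X) &= F((E\F_1)_+, \widetilde{E}\F_3\wedge X), & L_{I_1}(X) &= F((E\F_2)_+, \widetilde{E}\F_3\wedge X),\\
L_{I_2}(X) &= F((E\F_1)_+, \widetilde{E}\F_2\wedge X), & L_{I_2}L_{I_1}(X) &= F((E\F_1)_+, \widetilde{E}\F_2\wedge L_{I_1}(X)).
\end{align*}

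The key step is then to invoke the basic fracture square (in the form of the remark following Corollary~\ref{basic fracture}) for the family $\F_2$ applied to the object $\widetilde{E}\F_3\wedge X$, obtaining a homotopy cartesian square with corners $\widetilde{E}\F_3\wedge X$, $F((E\F_2)_+,\widetilde{E}\F_3\wedge X)$, $\widetilde{E}\F_2\wedge\widetilde{E}\F_3\wedge X$, and $\widetilde{E}\F_2\wedge F((E\F_2)_+,\widetilde{E}\F_3\wedge X)$. Applying $F((E\F_1)_+,-)$ preserves this cartesian property, as it is a homotopy limit functor. I would then identify the four corners using the elementary equivalences $F(A,F(B,-))=F(A\wedge B,-)$, $(E\F)_+\wedge(E\F')_+\simeq(E(\F\cap\F'))_+$, and $\widetilde{E}\F\wedge\widetilde{E}\F'\simeq\widetilde{E}(\F\cup\F')$. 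Thanks to the containments $\F_3\subseteq\F_2\subseteq\F_1$, these collapse $(E\F_1)_+\wedge(E\F_2)_+$ to $(E\F_2)_+$ and $\widetilde{E}\F_2\wedge\widetilde{E}\F_3$ to $\widetilde{E}\F_2$, so the upper-right and lower-left corners become $L_{I_1}(X)$ and $L_{I_2}(X)$; the other two corners are $L_I(X)$ and $L_{I_2}L_{I_1}(X)$ by inspection.

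The main obstacle I anticipate is verifying that these corner-wise identifications are compatible with all four maps of the square simultaneously, so that the resulting diagram coincides, as a square of natural transformations, with the one in the proposition rather than merely matching on corners. This is a diagram chase involving the canonical maps $(E\F_2)_+\to(E\F_1)_+$ and $\widetilde{E}\F_3\to\widetilde{E}\F_2$, together with the structure maps $(E\F)_+\to S^0\to\widetilde{E}\F$ that appear in the basic fracture square; the required commutativities follow from the uniqueness of $E\F$ up to weak $G$-equivalence (Elmendorf's theorem). Once this compatibility is checked, the homotopy cartesian property of the applied basic fracture square transports directly to the square in the proposition.
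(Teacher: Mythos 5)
Your plan is correct, and the one issue you flag (edge compatibility) is indeed the only remaining check; it is routine, since all four comparison maps are induced by $(E\F_2)_+\to(E\F_1)_+$, $S^0\to\widetilde{E}\F_2$ and the unit maps of $F((E\F_i)_+,-)$, and your corner identifications are induced by those same maps. Your route differs from the paper's in mechanics, though the essential input is the same, namely Corollary~\ref{basic fracture}. The paper works fiberwise: after decomposing $I=I'\setminus I''$ (your $\F_1,\F_3$; the middle family $\F_{I_1\sqcup I''}$ is your $\F_2$ --- note that with the statement's convention $x_1>x_2$ the paper's ``$I_2\sqcup I''$'' should be read as $I_1\sqcup I''$), it identifies the homotopy fibers of the two maps of the square pointing toward the $L_{I_1}$-side as function spectra out of $(E\F_1)_+\wedge\widetilde{E}\F_2$, and then shows the induced map of fibers is an equivalence by reducing to Corollary~\ref{basic fracture}; along the way it uses the auxiliary equivalence $F(A,Y)\simeq F(A,A\wedge Y)$ for $A=(E\F_1)_+\wedge\widetilde{E}\F_2$. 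You instead take the cartesian-square form of Corollary~\ref{basic fracture} (the remark following it) for the family $\F_2$ at the object $\widetilde{E}\F_3\wedge X$, apply the homotopy-limit-preserving functor $F((E\F_1)_+,-)$, and recognize the corners via $F(A,F(B,-))\simeq F(A\wedge B,-)$, $(E\F_1)_+\wedge(E\F_2)_+\simeq(E\F_2)_+$ and $\widetilde{E}\F_2\wedge\widetilde{E}\F_3\simeq\widetilde{E}\F_2$. What your version buys is that it avoids computing fibers and the auxiliary equivalence altogether, at the small price of the corner/edge compatibility check; the paper's version keeps the argument in the shape ``compare fibers of parallel maps,'' mirroring its proof of Proposition~\ref{fracturesquaregoodwillie}. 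Both are sound, and your chain $\F_1\supseteq\F_2\supseteq\F_3$ (whose existence and uniqueness do use the hypothesis that every element of $I_1$ exceeds every element of $I_2$, so that $I_1\sqcup I_{\F_3}$ is again upward closed) is exactly the decomposition underlying the paper's proof.
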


\begin{proof}
	It is enough to show that the induced map between the homotopy fibers of the vertical arrows is an equivalence. 
	Let decompose $I$ as the complement of two intervals $I=I'\setminus I'', G/e\in I'$, and $I''$ is either empty, or $G/e\in I''$. Then $I_2\sqcup I''$ is also an interval, and we have $I_1=I'\setminus (I_2\sqcup I''), I_2=(I_2\sqcup I'')\setminus I''$. Let $X\in \Sp^G$, then the homotopy fiber of the left vertical map applied to $X$ is 
	\begin{multline*}
		F((E\F_{I'})_+ \wedge \widetilde{E}\F_{(I_2\sqcup I'')},\widetilde{E}\F_{I''}\wedge X)\\
		\simeq F((E\F_{I'})_+ \wedge \widetilde{E}\F_{(I_2\sqcup I'')},\widetilde{E}\F_{I''}\wedge (E\F_{I'})_+ \wedge \widetilde{E}\F_{(I_2\sqcup I'')}\wedge X),
	\end{multline*}
	and the homotopy fiber on the right hand side is given by
	$$F((E\F_{I'})_+ \wedge \widetilde{E}\F_{(I_2\sqcup I'')},\widetilde{E}\F_{I''}\wedge F((E\F_{I'})_+,\widetilde{E}\F_{(I_2\sqcup I'')}\wedge X)). $$
Note that the last formula is equivalent to
$$ F((E\F_{I'})_+ \wedge \widetilde{E}\F_{(I_2\sqcup I'')},\widetilde{E}\F_{I''}\wedge (E\F_{I'})_+ \wedge F((E\F_{I'})_+,\widetilde{E}\F_{(I_2\sqcup I'')}\wedge X)).$$
Hence, it is sufficient to show that the map $E\F_{+}\wedge X \mapsto E\F_{+}\wedge F(E\F_{+},X)$ is an equivalence for all families $\F$ and all $X\in\Sp^G$. But this is Corollary~\ref{basic fracture}.
\end{proof}


		
\section{Computations}\label{computations}
\subsection{Cross-effects of the norm functor}
Let $G$ be a finite group, and let $H\subset G$ be a subgroup. In this section we show that the $i$-th cross-effect $cr_iN_H^G(X_1,\ldots,X_n)$ has a form $\vee_{j=1}^{m}\mathrm{Ind}_{K_j}^G Y_j$, where $K_j$ are subgroups of $G, |K_j\backslash G/H|\geq i$, and $Y_j \in \Sp^{K_j}$ for all $1 \leq j\leq m$. Moreover, we propose a functorial choice for the spectra $Y_j$. This is a consequence of distributive laws~\cite[Proposition A.37]{HHR16}, but here we present a slightly more direct way to see it. 

Let $n\in \N$, $[n]=\{1,\ldots, n\}$ be the set with $n$ elements, and let $\lambda\in \Hom(G/H,[n])$ be a map of finite sets. Denote by $G_\lambda\subset G$ the stabilizer subgroup of $\lambda$ under the natural $G$-action on $\Hom(G/H,[n])$.

\begin{dfn}
	Let $X_1,\ldots, X_n$ be $H$-sets. Let denote by $F_{\lambda}(X_1,\ldots,X_n)$ the preimage of $\lambda$ under the natural map induced by $X_1 \to \{1\},\ldots, X_n \to \{n\}$:
	$$\Coind_H^G(X_1\sqcup\ldots\sqcup X_n) \to \Hom(G/H,[n]).$$
\end{dfn}

\begin{prop}$\quad$
	\begin{enumerate}\label{decomposition of coinduction}
	\item $F_\lambda(X_1\ldots,X_n)\subset \Coind_H^G(X_1\sqcup\ldots\sqcup X_n)$ is stable under $G_{\lambda}$-action for all $X_1,\ldots, X_n$.
	\item $F_\lambda(X_1,\ldots,X_n)$ defines a functor from the category of $n$-tuples of $H$-sets to the category of $G_{\lambda}$-sets.
	\item Set $\mathcal{P}_n=\Hom(G/H,[n])/G$. Then $$\Coind_H^G(X_1\sqcup\ldots\sqcup X_n) = \coprod_{[\lambda]\in \mathcal{P}_n}\Ind_{G_\lambda}^GF_{\lambda}(X_1,\ldots,X_n).$$
\end{enumerate}
\end{prop}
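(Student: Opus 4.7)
The plan is to package all three parts through the single observation that the projection
\[
\pi\colon \Coind_H^G(X_1\sqcup\ldots\sqcup X_n)\to \Hom(G/H,[n])
\]
is $G$-equivariant (where $G$ acts on $\Hom(G/H,[n])$ through its natural action on $G/H$). Using the presentation $\Coind_H^G Y=\Hom_H(G,Y)$ with $G$ acting by right translation on the source, the map $\pi$ is obtained by post-composition with the constant map $X_1\sqcup\ldots\sqcup X_n\to [n]$, which is $H$-equivariant because the $H$-action on $[n]$ is trivial; post-composition commutes with right translation by $G$, so $\pi$ is $G$-equivariant.

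Part (1) is then immediate: the preimage of the single point $\lambda$ under a $G$-map is stable under $\mathrm{Stab}_G(\lambda)=G_\lambda$. For part (2), a tuple of $H$-maps $X_i\to X_i'$ induces a $G$-equivariant map $\Coind_H^G(X_1\sqcup\ldots\sqcup X_n)\to\Coind_H^G(X_1'\sqcup\ldots\sqcup X_n')$ strictly over $\Hom(G/H,[n])$, since the two composites with $\pi$ agree (both are induced by the unchanged collapse to $[n]$). Hence the preimages of $\lambda$ correspond, and this correspondence is $G_\lambda$-equivariant and clearly functorial in the tuple.

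For part (3), I first decompose $\Hom(G/H,[n])=\bigsqcup_{[\lambda]\in\mathcal{P}_n}G\cdot\lambda$ into $G$-orbits and transport this decomposition through $\pi$:
\[
\Coind_H^G(X_1\sqcup\ldots\sqcup X_n)=\bigsqcup_{[\lambda]\in\mathcal{P}_n}\pi^{-1}(G\cdot\lambda).
\]
It remains to identify $\pi^{-1}(G\cdot\lambda)$ with $\Ind_{G_\lambda}^G F_\lambda(X_1,\ldots,X_n)=G\times_{G_\lambda}F_\lambda(X_1,\ldots,X_n)$. The natural map
\[
G\times_{G_\lambda}F_\lambda(X_1,\ldots,X_n)\longrightarrow \pi^{-1}(G\cdot\lambda),\qquad [g,f]\mapsto g\cdot f,
\]
is well-defined (since $G_\lambda$ stabilises $F_\lambda$ by part (1)) and $G$-equivariant; it is surjective because every fibre of $\pi$ over $G\cdot\lambda$ is reached from the fibre over $\lambda$ by a $G$-translate, and injective because if $gf=g'f'$ with $f,f'\in F_\lambda$ then $g^{-1}g'$ must stabilise $\lambda$, hence lies in $G_\lambda$, making $[g,f]=[g',f']$ in the quotient.

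I do not anticipate a real obstacle here; the only point requiring care is the choice of conventions for $\Coind_H^G$ and the corresponding $G$-action on $\Hom(G/H,[n])$, which must be set up consistently so that $\pi$ is genuinely $G$-equivariant. Once that is fixed, parts (1), (2) and the orbit-stabiliser identification in (3) are essentially formal.
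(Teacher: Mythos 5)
Your argument is correct and is essentially the paper's own: the paper disposes of this proposition with ``follows immediately from the definitions,'' and your spelled-out version (equivariance of the projection $\pi$, stability of the fibre over $\lambda$ under $G_\lambda$, and the orbit--stabilizer identification $\pi^{-1}(G\cdot\lambda)\cong G\times_{G_\lambda}F_\lambda$) is exactly the content being left implicit there. No gaps; only the convention-matching for $\Coind_H^G$ that you already flag needs care.
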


\begin{proof} Follows immediately from the definitions.
\end{proof}

\begin{exmp}\label{decomposition of C_2}
	Let $G=C_2$, $H=e$, and $n=2$. Then $\mathcal{P}_n$ consists of three elements $[\lambda_1], [\lambda_2], [\lambda_3]$: $\lambda_1(C_2)=\{1\}, \lambda_2(C_2)=\{2\}$, and $\lambda_3$ is a bijection. One can see that $F_{\lambda_1}(X_1,X_2)=\Coind_e^{C_2}(X_1)$,  $F_{\lambda_2}(X_1,X_2)=\Coind_e^{C_2}(X_2)$, and $F_{\lambda_3}(X_1,X_2)\cong X_1\times X_2$. The stabilizer groups are $G_{\lambda_1}=G_{\lambda_2}=C_2$, and $G_{\lambda_3} = \{e\}$. 
	Therefore, $\Coind_e^{C_2}(X_1\sqcup X_2) =\Coind_e^{C_2}(X_1)\sqcup\Coind_e^{C_2}(X_2) \sqcup \Ind_e^{C_2}X_1\times X_2$.
\end{exmp}

Recall the Mackey double coset formula. Let $X$ be a $H$-set and let $K$ be another subgroup of $G$. Then 
\begin{equation}
\label{double coset formula sets}
\Res_K^G\Coind^G_H(X)\cong\prod_{g\in K\backslash G/H}\Coind^K_{H_g}\Res^H_{H_g}X,
\end{equation}
where $H_g=K\cap gHg^{-1}$ and $H_g$ acts on $X$ via $h\cdot x = (g^{-1}hg)\cdot x, h\in H, x\in X$. 

\begin{lmm}\label{double coset and F}
	Let $\lambda \in \Hom(G/H,[n])$. Under the isomorphism $$\Res^G_{G_{\lambda}}\Coind^G_H(X_1\sqcup \ldots\sqcup X_n)\cong \prod_{g\in G_{\lambda}\backslash G/H}\Coind^{G_{\lambda}}_{H_g}\Res^H_{H_g}(X_1\sqcup\ldots\sqcup X_n)$$
	the $G_{\lambda}$-subset $F_\lambda(X_1,\ldots, X_n)$ is identified with $$\prod_{g\in G_{\lambda}\backslash G/H}\Coind^K_{H_g}\Res^H_{H_g}X_{\lambda(g)}$$ for all $H$-sets $X_1,\ldots, X_n$.\qed 
\end{lmm}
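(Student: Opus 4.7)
The plan is to deduce the lemma from the naturality of~\eqref{double coset formula sets} applied to the projection $X_1\sqcup\ldots\sqcup X_n\to[n]$ of $H$-sets, combined with the observation that $\lambda$, being fixed by $G_\lambda$, is constant on every $G_\lambda$-orbit of $G/H$. Concretely, the double coset space $G_\lambda\backslash G/H$ classifies the $G_\lambda$-orbits on $G/H$; the orbit through $gH$ is $G_\lambda$-equivariantly isomorphic to $G_\lambda/H_g$, and on this orbit $\lambda$ takes the constant value $\lambda(gH)$, which I abbreviate by $\lambda(g)$.

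Applying naturality of~\eqref{double coset formula sets} to the projection $X_1\sqcup\ldots\sqcup X_n\to [n]$ yields a commutative square
\[
\begin{tikzcd}[column sep=small]
\Res^G_{G_\lambda}\Coind^G_H(X_1\sqcup\ldots\sqcup X_n) \arrow{r}{\sim}\arrow{d} &
\prod_g \Coind^{G_\lambda}_{H_g}\Res^H_{H_g}(X_1\sqcup\ldots\sqcup X_n)\arrow{d}\\
\Res^G_{G_\lambda}\Coind^G_H([n])\arrow{r}{\sim} &
\prod_g \Coind^{G_\lambda}_{H_g}([n])
\end{tikzcd}
\]
whose horizontals are Mackey isomorphisms. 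By definition $F_\lambda(X_1,\ldots,X_n)$ is the fibre of the left vertical over $\lambda$, so it suffices to compute the fibre of the right vertical over the image of $\lambda$. By the previous paragraph, this image is the tuple whose $g$-th component is the constant function $G_\lambda/H_g\to[n]$ with value $\lambda(g)$.

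The fibre then splits as a product of fibres, and the $g$-th factor is the subset of $\Coind^{G_\lambda}_{H_g}\Res^H_{H_g}(X_1\sqcup\ldots\sqcup X_n)$ consisting of $H_g$-equivariant functions landing in $X_{\lambda(g)}$, i.e.\ precisely $\Coind^{G_\lambda}_{H_g}\Res^H_{H_g}X_{\lambda(g)}$. The main obstacle is bookkeeping: one must check that the twisted $H_g$-module structure $h\cdot x=(g^{-1}hg)\cdot x$ on $X_{\lambda(g)}$ is intrinsic to the Mackey isomorphism rather than an artefact of a choice, but this is automatic from the naturality invoked above.
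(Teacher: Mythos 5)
Your argument is correct and is exactly the bookkeeping the paper omits: Lemma~\ref{double coset and F} is stated with no proof (it is regarded as immediate from the definitions), and your route---naturality of the Mackey isomorphism~\eqref{double coset formula sets} applied to the $H$-map $X_1\sqcup\ldots\sqcup X_n\to[n]$, together with the observation that a $G_\lambda$-fixed $\lambda$ is constant on $G_\lambda$-orbits of $G/H$, so the fibre splits orbitwise into $\Coind^{G_\lambda}_{H_g}\Res^H_{H_g}X_{\lambda(g)}$---is precisely the intended verification. Note only that the $\Coind^{K}_{H_g}$ appearing in the lemma's displayed formula is a typo for $\Coind^{G_\lambda}_{H_g}$, which your proof uses correctly.
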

The latter one is a subset of $\Coind^G_H(X_1\sqcup \ldots \sqcup X_n)$ through the inclusions $X_{\lambda(g)}\subset X_1\sqcup \ldots \sqcup X_n$.

We would like to obtain an analogous decomposition of Proposition~\ref{decomposition of coinduction} for the norm functor $N_H^G\colon \Sp^H \to \Sp^G$ as well. For doing that we turn Lemma~\ref{double coset and F} into a definition.

\begin{dfn}\label{F in spectra}
Let $\lambda\in \Hom(G/H,[n])$. We define a functor $\widetilde{F}_{\lambda}\colon (\Sp^H)^{\times n} \to \Sp^G$ by the rule
$$\widetilde{F}_{\lambda}(X_1,\ldots,X_n) = \bigwedge_{g\in G_{\lambda}\backslash G/H} N^{G_{\lambda}}_{H_g}\Res^H_{H_g}X_{\lambda(g)}.$$
\end{dfn}
Note that for any $H$-representation $V$ we have
\begin{equation}\label{twist of component}
	\widetilde{F}_{\lambda}(S^{V}\wedge X_1,\ldots,S^{V}\wedge X_n) \simeq S^{\Res^G_{G_{\lambda}}\Ind_H^G V}\wedge \widetilde{F}_{\lambda}(X_1,\ldots,X_n).
\end{equation}

The norm functor has an analogue of formula~\ref{double coset formula sets}. 
Namely, for $X\in \Sp^H$ we have an equivalence
\begin{equation}\label{double coset formula spectra}
\Res_K^GN^G_H(X)\simeq\bigwedge_{g\in K\backslash G/H}N^K_{H_g}\Res^H_{H_g}X.
\end{equation}
In the same manner as before we have the natural transformation $$\eta_{\lambda}\colon \widetilde{F}_{\lambda} \to \Res^G_{G_{\lambda}}N_H^G(-\vee\ldots\vee -)$$ induced by the maps $X_{\lambda(g)} \to X_1\vee\ldots \vee X_n$. Applying the adjunction, we get the natural transformations $$\mu_{\lambda}\colon \Ind^G_{G_{\lambda}} \widetilde{F}_{\lambda}\to N_H^G(-\vee\ldots\vee -)$$ for all $\lambda \in \Hom(G/H,[n])$.

\begin{prop}\label{decomposition of norm}
The natural transformation 
$$\mu=\bigvee\mu_{\lambda}\colon \bigvee_{[\lambda]\in \mathcal{P}_{n}}\Ind^G_{G_{\lambda}}\widetilde{F}_{\lambda}(X_1,\ldots,X_n) \to N_H^G(X_1\vee\ldots\vee X_n)$$
is a stable weak equivalence.
\end{prop}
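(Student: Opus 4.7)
The strategy is to reduce Proposition~\ref{decomposition of norm} to its set-theoretic shadow Proposition~\ref{decomposition of coinduction} by checking the natural transformation $\mu$ on a generating family of inputs, and then unwinding the definitions.

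First I would verify that both sides of $\mu$ define functors $(\Sp^H)^{\times n}\to\Sp^G$ that commute, variable by variable, with sifted homotopy colimits. For the right hand side this is Proposition~\ref{properties of the norm functor}(3) combined with the fact that $\vee$ is a colimit. For each summand $\Ind^G_{G_\lambda}\widetilde F_\lambda(X_1,\ldots,X_n)$ of the left hand side this holds because $\Ind^G_{G_\lambda}$ is a left adjoint, the smash product on $\EuScript{S}^{G_\lambda}$ is a left Quillen bifunctor and hence preserves sifted homotopy colimits slotwise on cofibrant objects, and each $N^{G_\lambda}_{H_g}\circ \Res^H_{H_g}$ commutes with sifted colimits again by Proposition~\ref{properties of the norm functor}(3). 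Since every cofibrant object in the positive complete model structure on $\EuScript{S}^H$ is a sifted homotopy colimit of spectra of the form $S^{-V}\wedge \Sigma^{\infty}_{+}A$ with $V$ an $H$-representation and $A$ a finite $H$-set, and since by enlarging we may choose a single $V$ that serves all $n$ slots simultaneously, it suffices to verify that $\mu$ is an equivalence for inputs $X_i = S^{-V}\wedge\Sigma^{\infty}_{+}A_i$.

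For such inputs the computation is a direct unpacking. On the right, the symmetric monoidality of $N_H^G$ combined with parts (1) and (2) of Proposition~\ref{properties of the norm functor} yields
\[
N_H^G(X_1\vee\cdots\vee X_n)\simeq S^{-\Ind_H^G V}\wedge \Sigma^{\infty}_{+}\Coind_H^G(A_1\sqcup\cdots\sqcup A_n),
\]
and Proposition~\ref{decomposition of coinduction} splits the second smash factor as the wedge over $[\lambda]\in\mathcal P_n$ of $\Sigma^{\infty}_{+}\Ind^G_{G_\lambda}F_\lambda(A_1,\ldots,A_n)$. For each summand on the left, Definition~\ref{F in spectra}, together with the properties of $N$ from Proposition~\ref{properties of the norm functor} and the projection formula for $\Ind^G_{G_\lambda}$, gives
\[
\Ind^G_{G_\lambda}\widetilde F_\lambda(X_1,\ldots,X_n)\simeq S^{-\Ind_H^G V}\wedge \Sigma^{\infty}_{+}\Ind^G_{G_\lambda}F_\lambda(A_1,\ldots,A_n),
\]
where the identification of the iterated smash product over $g\in G_\lambda\backslash G/H$ with $F_\lambda(A_1,\ldots,A_n)$ is Lemma~\ref{double coset and F}, and the decomposition of $\Res^G_{G_\lambda}\Ind_H^G V$ used in the sphere exponent is the representation-theoretic double coset formula underlying~\eqref{double coset formula spectra}. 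Taking the wedge over $[\lambda]$ then matches the two sides, and by the naturality of the construction of $\mu_\lambda$ the comparison map $\mu$ is visibly identified with the isomorphism of Proposition~\ref{decomposition of coinduction} smashed with $S^{-\Ind_H^G V}$.

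The main obstacle in this plan is the reduction step: one has to be careful that the implicit cofibrant replacement $Q$ hidden in $N_H^G=\widetilde N_H^G\circ Q$ and the derived smash products inside $\widetilde F_\lambda$ cooperate with the chosen sifted colimit presentation, so that the comparison between the pointwise computation on generators $S^{-V}\wedge\Sigma^{\infty}_{+}A_i$ and the general case really is governed by Proposition~\ref{norm functor is left derivable} and the Quillen bifunctoriality of $\wedge$. Once this is arranged, the remainder of the proof is a naturality check and an application of the set-level decomposition.
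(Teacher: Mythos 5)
Your proposal is correct and follows essentially the same route as the paper: reduce by noting both sides commute with sifted homotopy colimits, check the map on cellular generators built from finite $H$-sets and representation spheres, and conclude via the set-level splitting of Proposition~\ref{decomposition of coinduction}. The only cosmetic difference is that the paper handles the representation spheres by twisting both sides with $S^{V}$ (formula~\ref{twist of component}) so as to reduce to suspension spectra $\Sigma^{\infty}_{+}Y_i$, whereas you carry the $S^{-\Ind_H^G V}$ factor through the computation directly; these are equivalent bookkeeping choices.
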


\begin{proof}
	Note that the both sides commute with homotopy sifted colimits as functors from $(\Sp^H)^{\times n} \to \Sp^G$ by Proposition~\ref{properties of the norm functor}. Moreover, after twisting each $X_i$ on $S^V$ for some $H$-representation $V$, both sides will twist on $S^{\Ind_H^G V}$ by the same proposition and formula~\ref{twist of component}. Therefore, it is enough to show that $\mu$ is equivalence only for the case when all $X_i$ are suspension spectra of finite $H$-sets. In other words, we can assume that $X_i=\Sigma^{\infty}_{+}Y_i$, $Y_i\in \mathrm{Set}^{H}$ for all $1\leq i\leq n$. Then the right hand side is $\Sigma^{\infty}_{+}\Coind_H^G(Y_1\sqcup \ldots \sqcup Y_n)$, and the left hand side is equal to
$$\Sigma^{\infty}_{+}\coprod_{[\lambda]\in \mathcal{P}_n}\Ind_{G_{\lambda}}^GF_\lambda(Y_1,\ldots, Y_n). $$
Now the result follows from Proposition~\ref{decomposition of coinduction}
\end{proof}

Let denote by $\mathcal{P}^{surj}_n$ the subset of {\it surjective} maps in $\mathcal{P}_n=\Hom(G/H,[n])$. Note that if $\lambda \in \mathcal{P}^{surj}_n$, then $|G_{\lambda}\backslash G/H|\geq n$.

\begin{cor}\label{cross-effect of norm}
Let $n\in \N$. Then for all $X_1,\ldots, X_n \in \Sp^H$:
$$
cr_nN_H^G(X_1,\ldots,X_n)\simeq \bigvee_{[\lambda]\in \mathcal{P}^{surj}_n}\Ind^G_{G_{\lambda}}\widetilde{F}_{\lambda}(X_1,\ldots,X_n). \eqno\qed
$$
\end{cor}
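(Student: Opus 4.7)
The plan is to apply Proposition~\ref{decomposition of norm} at each vertex of the cube defining the $n$-th cross-effect and to verify that the resulting wedge decomposition is compatible with the cube structure, so that the total homotopy fibre splits as a wedge of total fibres indexed by $[\lambda]\in\mathcal{P}_n$.

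First I would record that for every $T\in\mathcal{P}([n])$ Proposition~\ref{decomposition of norm} gives a natural equivalence
$$N_H^G\Big(\bigvee_{i\in[n]-T}X_i\Big)\simeq\bigvee_{[\lambda]\in\mathcal{P}_n,\ \mathrm{Im}(\lambda)\subset[n]-T}\Ind^G_{G_{\lambda}}\widetilde{F}_\lambda(X_1,\ldots,X_n),$$
where I use that $\widetilde{F}_\lambda(X_1,\ldots,X_n)$ depends only on the $X_i$ with $i\in\mathrm{Im}(\lambda)$. For $T\subset T'$ the cube map is induced by the pointed projection collapsing $X_i\to *$ for $i\in T'-T$. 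Since $N_{H_g}^{G_{\lambda}}(\ast)\simeq\ast$ and smash products with the zero object vanish, on the summand labelled by $[\lambda]$ with image $S=\mathrm{Im}(\lambda)$ this induced map is the identity when $S\cap T'=\emptyset$ and is the zero map otherwise.

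With that compatibility in hand, stability of $\Sp^G$ lets me split the total homotopy fibre of the cube as a wedge over $[\lambda]\in\mathcal{P}_n$ of the total fibres of the sub-cubes $D_\lambda$ defined by $D_\lambda(T)=\Ind_{G_\lambda}^G\widetilde{F}_\lambda(X_1,\ldots,X_n)$ when $S\cap T=\emptyset$ and $D_\lambda(T)=\ast$ otherwise. For surjective $\lambda$, only the vertex $T=\emptyset$ is nonzero and the iterated fibre construction immediately yields $\Ind_{G_\lambda}^G\widetilde{F}_\lambda$. For non-surjective $\lambda$, taking fibres in the $|S|$ directions indexed by $S$ (each of the form $\mathrm{fib}(\Ind\widetilde{F}_\lambda\to\ast)$ or $\mathrm{fib}(\ast\to\ast)$) reduces $D_\lambda$ to the constant $(n-|S|)$-cube on $\Ind_{G_\lambda}^G\widetilde{F}_\lambda$ indexed by subsets of $[n]-S\neq\emptyset$; a constant cube of positive dimension is homotopy cartesian, so its total fibre is $\ast$. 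Wedging the surviving contributions produces the claimed formula.

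The main obstacle is the compatibility check in the second paragraph, which rests on the naturality of Proposition~\ref{decomposition of norm} in each variable $X_i$ together with the vanishing $N_{H_g}^{G_\lambda}(\ast)\simeq\ast$; the latter follows from the realisation of the norm as an indexed smash power, combined with $X\wedge\ast\simeq\ast$ in $\Sp^G$. Once this is in place, everything else is formal in view of the stability of $\Sp^G$ and the contractibility of the nerve of $\mathcal{P}_0([n]-S)$ (which has a maximal element).
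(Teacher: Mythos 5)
Your proof is correct and follows exactly the route the paper intends: the corollary is stated with no written proof precisely because it is meant to follow from applying Proposition~\ref{decomposition of norm} at every vertex of the cross-effect cube, observing that $\widetilde{F}_{\lambda}$ vanishes once a coordinate in the image of $\lambda$ is collapsed, and noting that the non-surjective summands give cartesian (constant) sub-cubes with trivial total fibre. Your write-up simply makes these implicit steps explicit, so there is nothing to add.
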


\subsection{Goodwillie tower}

Let $G$ be a finite group, and let $H\subset G$ be a subgroup. Recall that $\mathcal{O}_G$ is the poset of transitive $G$-sets. Let denote by $\mathbb{N}$ the poset of natural numbers (with usual ordering).
\begin{ntt}\label{quotientmap} We denote by $q_H\colon \mathcal{O}_G \to \mathbb{N}$ the map given by the rule $q_H(X) = |X/H|$.
\end{ntt}
Since $q_H$ preserves order, $q_H^{-1}(I)\subset \mathcal{O}_G$ is an interval for any interval $I \subset \mathbb{N}$. In this section, we give sufficient conditions to compare $L_IF$ with $L_{q^{-1}_H(I)}\circ F$ for a functor $F\colon \Sp^H \to \Sp^G$.

\begin{dfn}
	Let $K\subset G$ be a subgroup. The pointed $G$-space $E_K\in \mathrm{Top}_*^G$ is given by
$$
(E_{K})^{K_1}=\left\{\begin{array}{ll}
		S^0, & \text{$K_1$ is conjugate to $K$}; \\
\{\mathrm{pt}\}, & \text{otherwise}. \end{array}\right. 
$$
\end{dfn}

\begin{exmp}
$E_{e}=EG_{+}$, $E_G=\widetilde{E}\mathcal{P}_G$. In general, if $\F'\subset \F$ are two families of subgroups such that the complement $\F\setminus \F'$ consists only of the subgroups conjugate to $K$, then $E_K=\widetilde{E}\F' \wedge E\F_{+}$.
\end{exmp}

\begin{dfn} Let $\F$ be a family of subgroups of $G$. Let denote by $\delta(\F)$ the number $\min_{K\in \F} |K\backslash G/H|=\min_{K\in\F}q_H(G/K)$.
\end{dfn}

\begin{lmm}\label{familiesandexcisiviness}
Let $F\colon \Sp^H \to \Sp^G$ be a reduced functor such that $E_K\wedge F$ is $q_H(G/K)$-homogeneous and $q_H(G/K)$-coreduced for any $K\subset G$. Then for any family $\F$
\begin{enumerate}
	\item $E\F_{+}\wedge F$ is $\delta(\F)$-reduced and $\delta(\F)$-coreduced;
	\item $F(E\F_{+}, F)\simeq F(E\F_+,E\F_+\wedge F)$ is $\delta(\F)$-coreduced.
\end{enumerate}
\end{lmm}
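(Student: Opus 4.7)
The plan is an induction on the family $\F$, based on the cofiber sequence of pointed $G$-spaces $E\F'_+ \to E\F_+ \to E_K$ for $\F = \F' \sqcup [K]$ with $K$ a maximal element of $\F$ under subconjugacy, and its dual $E_K \to \widetilde{E}\F_{i-1} \to \widetilde{E}\F_i$ for families $\F_{i-1} \subset \F_i$ differing by a single conjugacy class $[K]$. The hypothesis gives tight control over each building block $E_K \wedge F$, and the stable ambient setting ensures that $n$-reducedness, $n$-coreducedness, and $n$-excisiveness are all preserved under cofiber sequences.

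For part (2), I would induct on the number of conjugacy classes in $\F$, the case $\F = \emptyset$ being trivial. Smashing the cofiber sequence $E\F'_+ \to E\F_+ \to E_K$ with $F$ yields a cofiber sequence of functors whose outer terms are both $\delta(\F)$-reduced and $\delta(\F)$-coreduced: $E_K \wedge F$ is $q_H(G/K)$-homogeneous (hence $q_H(G/K)$-reduced) and $q_H(G/K)$-coreduced by hypothesis, with $q_H(G/K) \geq \delta(\F)$ since $K \in \F$; the term $E\F'_+ \wedge F$ inherits the two properties by induction, using $\delta(\F') \geq \delta(\F)$. Both $\delta(\F)$-reducedness (kernel of the left-adjoint reflection $P_{\delta(\F)-1}$, which preserves colimits) and $\delta(\F)$-coreducedness (kernel of the right-adjoint coreflection $P^{\delta(\F)-1}$, which preserves limits and hence cofibers via stability) are closed under cofiber sequences, so the middle term $E\F_+ \wedge F$ inherits them.

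For part (3), Corollary~\ref{basic fracture} together with the tensor--hom adjunction yields the equivalence $F(E\F_+, F) \simeq F(E\F_+, E\F_+ \wedge F)$ via the idempotence of smashing with $E\F_+$. Since $E\F_+ \wedge F$ is $\delta(\F)$-coreduced by (2), and in the stable setting $F(A, -)$ preserves $n$-coreducedness for any pointed $G$-space $A$ (because cartesian and cocartesian cubes coincide, so both $A \wedge -$ and $F(A, -)$ preserve excisiveness, and hence preserve the kernel of the coreflection $P^{n-1}$), the claim follows.

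For part (1), I would dualize the induction in (2): start from $\widetilde{E}\F_{\max} \simeq \ast$ (where $\F_{\max}$ is the family of all subgroups, hence trivially $\delta(\F)$-excisive) and reach $\widetilde{E}\F$ by a chain of cofiber sequences $E_{K_i} \to \widetilde{E}\F_{i-1} \to \widetilde{E}\F_i$, each removing one conjugacy class $[K_i]$ with $K_i \notin \F$. Smashing with $F$ and using closure of $\delta(\F)$-excisiveness under cofibers, the claim reduces to showing that each $E_{K_i} \wedge F$ is $\delta(\F)$-excisive for $K_i \notin \F$. This is the main obstacle of the proof: the hypothesis only provides $q_H(G/K_i)$-excisiveness, and one must establish the inequality $q_H(G/K_i) \leq \delta(\F)$ for $K_i \notin \F$ by exploiting the combinatorial structure of families, namely the closure of $\F$ under subconjugacy together with the order-preserving map $q_H\colon \mathcal{O}_G \to \mathbb{N}$.
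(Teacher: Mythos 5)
Your parts (2) and (3) are essentially the paper's own argument: the same induction over conjugacy classes via the cofiber sequence $E\F_{+}\wedge F \to E\F'_{+}\wedge F \to E_K\wedge F$, closure of $\delta(\F)$-reducedness and $\delta(\F)$-coreducedness under cofiber sequences in the stable setting, and for (3) the fact that $F(A,-)$ commutes with the cotruncations $P^{k}$ (your parenthetical justification is terse, but it can be completed by the universal property of $P^{k}$, and the paper simply asserts this commutation). The problem is part (1). You correctly isolate the crux: after the downward induction one must know that $E_K\wedge F$ is $\delta(\F)$-excisive for $K\notin\F$, i.e.\ that $q_H(G/K)\le\delta(\F)$ for every $K\notin\F$. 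But you then claim this inequality follows from closure of $\F$ under subconjugacy together with monotonicity of $q_H$, and it does not: subgroups outside a family need not be subconjugate-comparable to the subgroups inside it, so nothing bounds their $q_H$-values. Concretely, take $G=\Sigma_3$, $H=e$, and $\F=\{e,C_3\}$ (a family, since $C_3$ is normal with no proper nontrivial subgroups). Then $\delta(\F)=|C_3\backslash\Sigma_3|=2$, while $C_2\notin\F$ has $|C_2\backslash\Sigma_3|=3$. Taking $F=N_e^{\Sigma_3}$, which satisfies the hypotheses of the lemma by Propositions~\ref{norm is norm-like} and~\ref{norm-like and powers}, one gets $\Phi^{C_2}\big(\widetilde{E}\F\wedge N_e^{\Sigma_3}X\big)\simeq X^{\wedge 3}$, a nontrivial $3$-homogeneous functor; since $\Phi^{C_2}$ preserves cartesian cubes, $2$-excisiveness of $\widetilde{E}\F\wedge F$ would force $2$-excisiveness of $X\mapsto X^{\wedge 3}$, which fails. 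So the step you defer to "combinatorial structure" is not merely missing—it is unavailable for arbitrary families.

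To be fair, you have put your finger on exactly the point that the paper's own proof glosses over: in its downward induction it asserts $N=\max(q_H(G/K),\delta(\F))\le\delta(\F')$, which is the same inequality and fails for the family above, so part (1) as stated for \emph{all} families is too strong. What is true, and what both your argument and the paper's become once corrected, is that $\widetilde{E}\F\wedge F$ is $\big(\max_{K\notin\F}q_H(G/K)\big)$-excisive; this follows cleanly from Lemma~\ref{Phidetectsexcisiviness} exactly as in the proof of Corollary~\ref{goodwillietruncation}, and it yields $\delta(\F)$-excisiveness precisely when $\max_{K\notin\F}q_H(G/K)\le\delta(\F)$. That condition does hold for the families actually used downstream, namely $\F_n=\{K : |K\backslash G/H|>n\}$, where the complement automatically satisfies $q_H(G/K)\le n<\delta(\F_n)$. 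If you want a complete write-up, either restrict part (1) to such families or replace your inductive argument for (1) by the geometric-fixed-point argument and record the correct bound.
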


\begin{proof}

	We prove the first statement by induction. The base case is $\F=\{e\}$, then $E\F_{+}=E_G$ and $E\F_+ \wedge F$ is $\delta(\F)$-reduced and $\delta(\F)$-coreduced by assumptions.

	The step of inducteion goes as follows. Let $\F \subset \F'$, $\F'\setminus \F$ consists of only one conjugacy class of $K$. Suppose that $E\F_{+}\wedge F$ is $\delta(\F)$-reduced and $\delta(\F)$-coreduced, we will show that $E\F'_{+}\wedge F$ is $\delta(\F')$-reduced and $\delta(\F')$. Indeed, consider the cofiber sequence
\begin{equation}\label{twofamilies}
	E\F_{+}\wedge F \to E\F'_{+}\wedge F \to E\F'_{+}\wedge \widetilde{E}\F \wedge F.
\end{equation}
The product $E\F'_+ \wedge \widetilde{E}\F$ is equal to $E_K$, and by induction the functor $E\F'_{+}\wedge F$ is $\min(\delta(\F),q_H(G/K))$-reduced and $\min(\delta(\F),q_H(G/K))$-coreduced. Finally, we have $\delta(\F') = \min(\delta(\F),q_H(G/K))$.

For the last statement, we use that $P^kF(A,F)\simeq F(A,P^kF)$ for any functor $F$ and any $G$-CW complex $A$.  
\end{proof}

\begin{lmm}\label{Phidetectsexcisiviness}
A functor $F\colon \Sp^H \to \Sp^G$ is $n$-excisive if and only if $\Phi^KF\colon \Sp^H \to \Sp^G$ are $n$-excisive for all $K\subset G$.
\end{lmm}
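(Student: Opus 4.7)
The plan is to combine two properties of geometric fixed points recalled just before the statement: (i) each $\Phi^K$ preserves finite homotopy limits and stable weak equivalences, and (ii) the collection $\{\Phi^K\}_{K\subset G}$ is jointly conservative on stable weak equivalences by Proposition~\ref{jointly conservative}. Both implications come out of fitting these two facts against the definition of $n$-excisiveness, which only involves the finite homotopy limit indexed by $\mathcal{P}_0(S)$ for $|S|=n+1$.

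For the ``only if'' direction, I would fix a strongly cocartesian $(n+1)$-cube $\mathcal{X}$ in $\Sp^H$. Since $F$ is $n$-excisive, the comparison map $F\mathcal{X}(\emptyset)\to\holim_{T\in\mathcal{P}_0(S)}F\mathcal{X}(T)$ is a stable weak equivalence. Applying $\Phi^K$ and commuting it past the finite homotopy limit yields, up to natural equivalence, the cartesian comparison map for $\Phi^K F(\mathcal{X})$, which is therefore an equivalence; hence $\Phi^K F$ is $n$-excisive.

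For the converse, given again a strongly cocartesian $(n+1)$-cube $\mathcal{X}$ in $\Sp^H$, I want to show that $F\mathcal{X}(\emptyset)\to \holim_{T\in\mathcal{P}_0(S)}F\mathcal{X}(T)$ is a stable weak equivalence in $\Sp^G$. By Proposition~\ref{jointly conservative} it suffices to check this after applying each $\Phi^K$. Commuting $\Phi^K$ past the finite homotopy limit identifies the resulting map with the cartesian comparison for the cube $\Phi^K F(\mathcal{X})$; since $\Phi^K F$ is $n$-excisive by hypothesis and $\mathcal{X}$ is still strongly cocartesian as a diagram in $\Sp^H$, this map is a weak equivalence, which is exactly what we needed.

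The only mildly delicate point is that the commutation of $\Phi^K$ with $\holim_{T\in\mathcal{P}_0(S)}$ is only asserted up to natural stable weak equivalence rather than on the nose; but both implications only care about the comparison map up to weak equivalence, so this is harmless. I do not expect any real obstacle: the lemma is essentially the compatibility between the ``detection of equivalences'' package for equivariant spectra and the ``pointwise computation'' package for Goodwillie's excision.
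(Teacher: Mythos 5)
Your argument is correct and is essentially the paper's own proof: both rest on the facts that each $\Phi^K$ preserves finite homotopy limits (hence the cartesian comparison map of a cube) and that the $\Phi^K$ are jointly conservative by Proposition~\ref{jointly conservative}. No further comment is needed.
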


\begin{proof}
	The functors $\Phi^K$ preserve finite homotopy limits and jointly conservative, see Proposition~\ref{jointly conservative}. Therefore, a $(n+1)$-cube $\mathcal{X}$ in $\Sp^G$ is cartesian if and only if the cubes $\Phi^K\mathcal{X}$ are cartesian.
\end{proof}

\begin{cor}\label{goodwillietruncation}
	Let $F$ be a functor as in Lemma~\ref{familiesandexcisiviness}, and let $\F_n$ be the largest family of subgroups in $G$ such that $\delta(\F_n)>n$ (i.e. $K\in\F_n \iff |K\backslash G/H|>n$.) Then $\widetilde{E}\F_n \wedge F$ is $n$-excisive, and $(E\F_n)_{+}\wedge F$ is $(n+1)$-reduced. In particular, the natural map $F \to \widetilde{E}\F_n\wedge F$ is the $n$-th Goodwillie approximation.
\end{cor}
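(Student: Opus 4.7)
The plan is to address the three conclusions in the order stated, with Lemma~\ref{familiesandexcisiviness} and the joint conservativity of geometric fixed points (Lemma~\ref{Phidetectsexcisiviness}) as the principal tools. The one subtle point is that a direct application of Lemma~\ref{familiesandexcisiviness}(1) only yields $\delta(\F_n)$-excisiveness of $\widetilde{E}\F_n\wedge F$; since $\delta(\F_n)\geq n+1$, this is strictly weaker than the required $n$-excisiveness, so part~(1) demands a finer argument through geometric fixed points rather than a direct appeal to the lemma.

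For the $n$-excisiveness I would fix $K\subset G$ and, using that $\Phi^K$ is symmetric monoidal up to equivalence, write $\Phi^K(\widetilde{E}\F_n\wedge F)\simeq \Phi^K(\widetilde{E}\F_n)\wedge \Phi^K(F)$. When $K\in\F_n$ the fixed space $(\widetilde{E}\F_n)^K$ is contractible, so this smash product vanishes and there is nothing to check. When $K\notin\F_n$ both $(\widetilde{E}\F_n)^K$ and $(E_K)^K$ are equivalent to $S^0$, hence $\Phi^K(\widetilde{E}\F_n\wedge F)\simeq \Phi^K(F)\simeq \Phi^K(E_K\wedge F)$. Because $E_K\wedge F$ is $q_H(G/K)$-excisive by hypothesis and $\Phi^K$ preserves finite homotopy limits, the same holds for $\Phi^K(E_K\wedge F)$; and the condition $K\notin\F_n$ translates precisely to $q_H(G/K)\leq n$. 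Lemma~\ref{Phidetectsexcisiviness} then promotes this pointwise conclusion to $n$-excisiveness of $\widetilde{E}\F_n\wedge F$.

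The second conclusion is immediate from Lemma~\ref{familiesandexcisiviness}(2): the functor $(E\F_n)_+\wedge F$ is $\delta(\F_n)$-reduced with $\delta(\F_n)-1\geq n$, so $P_{\delta(\F_n)-1}((E\F_n)_+\wedge F)\simeq\ast$, which forces $P_n((E\F_n)_+\wedge F)\simeq\ast$ as well, i.e.\ $(n+1)$-reducedness.

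Finally, for the identification with the $n$-th Goodwillie approximation I would smash $F$ against the cofiber sequence $(E\F_n)_+\to S^0\to \widetilde{E}\F_n$ and apply $P_n$. Since $P_n$ is assembled from finite homotopy limits and sequential homotopy colimits, it preserves fiber/cofiber sequences in the stable setting; combined with the first two parts this collapses the resulting sequence to $\ast\to P_nF\xrightarrow{\sim}\widetilde{E}\F_n\wedge F$, identifying the natural map $F\to\widetilde{E}\F_n\wedge F$ with the universal map $F\to P_nF$.
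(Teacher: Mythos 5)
Your proposal is correct and follows essentially the same route as the paper: the $(n+1)$-reducedness comes from Lemma~\ref{familiesandexcisiviness}(2) via $\delta(\F_n)\geq n+1$, and the $n$-excisiveness is checked on geometric fixed points exactly as in the paper (contractible for $K\in\F_n$, and $\Phi^K(\widetilde{E}\F_n\wedge F)\simeq\Phi^K(E_K\wedge F)$ with $q_H(G/K)\leq n$ for $K\notin\F_n$), concluded by Lemma~\ref{Phidetectsexcisiviness}. Your final step, smashing with the cofiber sequence $(E\F_n)_+\to S^0\to\widetilde{E}\F_n$ and applying $P_n$, just makes explicit the identification the paper leaves implicit, and your opening remark correctly pinpoints why a direct appeal to Lemma~\ref{familiesandexcisiviness}(1) would not suffice.
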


\begin{proof}
	We note that $\delta(\F_n)\geq n+1$, so $(E\F_n)_{+}\wedge F$ is $(n+1)$-reduced by applying the second part of Lemma~\ref{familiesandexcisiviness}. 

	By Lemma~\ref{Phidetectsexcisiviness} it is enough to show that $\Phi^K(\widetilde{E}\F_n \wedge F)$ is $n$-excisive for all $K\subset G$. However, if $K\in\F_n$, then $\Phi^K(\widetilde{E}\F_n \wedge F(X))$ is contractible for all $X\in\Sp^H$, and if $K\notin \F_n$, then $\Phi^K(\widetilde{E}\F_n \wedge F) \simeq \Phi^K(F) \simeq \Phi^K(E_K\wedge F)$. By the assumption $E_K\wedge F$ is $|K\backslash G/H|$-excisive, and $|K\backslash G/H|\leq n$. Hence $\Phi^K(\widetilde{E}\F_n \wedge F)$ is also $n$-excisive for $K\notin \F_n$.
\end{proof}

Unfortunately, the assumptions of Lemma~\ref{familiesandexcisiviness} are not enough to show the dual statement. Namely, we can not show that in such generality the $(k+1)$-coreduction $L_kF$ is $F((E\F_{k})_{+},F)$. 

\begin{exmp}
The functor $F\colon \Sp \to \Sp^{C_2}, X\mapsto {EC_2}_{+}\wedge \mathrm{triv}_e^{C_2}(X^{\wedge 2})$ is homogeneous of degree 2, $2$-coreduced, and it satisfies all properties in Lemma~\ref{familiesandexcisiviness}, but the natural map 
$${EC_2}_{+}\wedge \mathrm{triv}_e^{C_2}(X^{\wedge 2}) \to F({EC_2}_{+},{EC_2}_{+}\wedge \mathrm{triv}_e^{C_2}(X^{\wedge 2}))$$
is not an equivalence.
\end{exmp}

\begin{dfn}\label{norm-like} A reduced functor $F\colon \Sp^H \to \Sp^G$ is called {\it norm-like}, if it satisfies three following conditions: 
\begin{enumerate}
\item For any $X_1,\ldots X_i\in \Sp^H$ the $i$-th crosseffect $cr_iF(X_1,\ldots, X_i)$ has a form $\vee_{j=1}^{m}\mathrm{Ind}_{K_j}^G Y_j$, where $K_j$ are subgroups of $G, |K_j\backslash G/H|\geq i$, and $Y_j \in \Sp^{K_j}$ for all $1 \leq j\leq m$.
\item For any $X\in \Sp^H$ and for any $K\subset G$, 
$cr_{q_H(G/K)}\Phi^KF(X,\ldots, X)$ is naturally equivalent to $(\Sigma_{q_H(G/K)})_{+}\wedge \Phi^KF(X)$ as Borel $\Sigma_{q_H(G/K)}$-spectra.
\item $F$ is $|G/H|$-excisive.
\end{enumerate}
\end{dfn}

\begin{lmm}\label{norm-like smash and mapping}
Let $F\colon \Sp^H \to \Sp^G$ be a norm-like functor, and let $B\in \Sp^G$. Then $B\wedge F(-)$ and $F(B,F(-))$ are both norm-like
\end{lmm}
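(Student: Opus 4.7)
The plan is to verify each of the three conditions of Definition~\ref{norm-like} for both $G_1(X) := B \wedge F(X)$ and $G_2(X) := F(B, F(X))$, in order of increasing difficulty.

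Condition~(3) is immediate: both $B \wedge -$ and $F(B,-)$ are exact endofunctors of the stable category $\Sp^G$, so postcomposing a homotopy functor with them preserves $n$-excisiveness (exact functors carry homotopy cartesian cubes to homotopy cartesian cubes). Hence $G_1, G_2$ inherit the $|G/H|$-excisive property from $F$.

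For condition~(1), since exact functors commute with the finite homotopy fibers defining cross-effects, $cr_i G_1 \simeq B \wedge cr_i F$ and $cr_i G_2 \simeq F(B, cr_i F)$. The projection formula $B \wedge \Ind_L^G Y \simeq \Ind_L^G(\Res_L^G B \wedge Y)$, and its dual $F(B, \Ind_L^G Y) \simeq \Coind_L^G F(\Res_L^G B, Y) \simeq \Ind_L^G F(\Res_L^G B, Y)$ (the last equivalence being the Wirthm\"uller isomorphism, valid for finite-index subgroups of a finite group), then transfer the wedge-of-induced-spectra decomposition of $cr_i F$ to $cr_i G_1$ and $cr_i G_2$ with the \emph{same} stabilizers, preserving the bound $|K_j \backslash G/H| \geq i$.

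Condition~(2) is the main part. Set $n := q_H(G/K)$. For $G_1$ the verification is easy, since $\Phi^K$ is strong symmetric monoidal and preserves finite homotopy limits by Proposition~\ref{jointly conservative}:
\[
cr_n \Phi^K G_1(X,\ldots,X) \simeq \Phi^K B \wedge cr_n \Phi^K F(X,\ldots,X) \simeq (\Sigma_n)_+ \wedge \Phi^K G_1(X),
\]
where the middle equivalence uses condition~(2) for $F$.

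The genuine obstacle is condition~(2) for $G_2$, since $\Phi^K$ does \emph{not} in general commute with the internal hom $F(B,-)$. The strategy is to avoid commuting $\Phi^K$ past $F(B,-)$ directly, and instead exploit the explicit cross-effect decomposition. Substituting $cr_n F(X,\ldots,X) \simeq \bigvee_l \Ind_{K_l}^G Y_l$ from condition~(1) on $F$ into $cr_n G_2(X,\ldots,X) \simeq F(B, cr_n F(X,\ldots,X))$ and applying the dual projection formula yields
\[
cr_n G_2(X,\ldots,X) \simeq \bigvee_l \Ind_{K_l}^G F(\Res_{K_l}^G B, Y_l|_{\Delta}).
\]
Now apply $\Phi^K$ via the double coset formula for geometric fixed points of induced spectra: only the summands with $K_l$ subconjugate to $K$ survive, and the bound $|K_l \backslash G/H| \geq n = q_H(G/K)$ from condition~(1) forces such $K_l$ to be conjugate to $K$ itself. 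The surviving terms then match, summand by summand, the parallel expansion of $(\Sigma_n)_+ \wedge \Phi^K G_2(X)$; the matching ultimately reduces to condition~(2) on $F$ applied inside $F(\Res_K^G B, -)$. Carrying out this bookkeeping is what I expect to be the most delicate step, but no new ingredient beyond condition~(2) for $F$ and the standard Mackey-type formulas is needed.
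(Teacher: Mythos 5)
Your handling of condition (3), of condition (1) for both functors (projection formula plus the Wirthm\"uller isomorphism), and of condition (2) for $B\wedge F$ (monoidality of $\Phi^K$) is correct, and this is exactly the ``projection formula'' argument the paper intends. The gap is in the step you yourself single out as the main one: condition (2) for $F(B,F(-))$. First, the survival criterion is stated backwards and its consequence is false: $\Phi^K\Ind_{K_l}^G$ is nonvanishing only when $K$ is subconjugate to $K_l$, and then the bound $|K_l\backslash G/H|\geq n=q_H(G/K)$ only forces the equality $|K_l\backslash G/H|=n$, not that $K_l$ is conjugate to $K$ --- a strictly larger subgroup can have the same number of double cosets. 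Concretely, take $G=C_2\times C_2\times C_2=\langle a,b,c\rangle$, $H=\langle a\rangle$, $K=\langle b\rangle$, $F=N_H^G$: here $n=2$, and $cr_2N_H^G(X,X)$ contains the summand $\Ind_{\langle a,b\rangle}^G\bigl(N_H^{\langle a,b\rangle}X\wedge N_H^{\langle a,b\rangle}X\bigr)$, which survives $\Phi^{\langle b\rangle}$ even though $\langle a,b\rangle$ is not conjugate to $\langle b\rangle$.

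Second, and more seriously, the proposed ``matching summand by summand'' has nothing to match against: $\Phi^K$ does not commute with $F(B,-)$, and condition (1) gives no wedge-of-induced decomposition of $F(X)$ itself (for $i=1$ it is vacuous, since one may take $K_1=G$), so there is no ``parallel expansion'' of $(\Sigma_n)_+\wedge\Phi^KF(B,F(X))$. After the double coset bookkeeping the left-hand side becomes a wedge of terms $\Phi^KF\bigl(\Res^G_KB,\Res_K({}^gY_l)\bigr)$, so what you actually need is an identification of the $K$-spectra $\bigvee_{(l,g)}\Res_K({}^gY_l)$ with $n!$ copies of $\Res^G_KF(X)$ \emph{before} applying $\Phi^KF(\Res^G_KB,-)$. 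Condition (2) for $F$ only provides such an identification \emph{after} applying $\Phi^K$, and an equivalence of geometric fixed points cannot be transported through the internal hom $F(\Res^G_KB,-)$; so ``condition (2) applied inside $F(\Res^G_KB,-)$'' is not a legitimate move. In the $C_2^{\times 3}$ example the desired conclusion does hold, but only because of a computation special to the norm (the double coset formula shows $\Res^{\langle a,b\rangle}_{\langle b\rangle}$ of the surviving $Y_l$ agrees with $\Res^G_{\langle b\rangle}N_H^GX$), which is information not contained in conditions (1)--(3) as you use them. As written, the only genuinely nontrivial assertion of the lemma is therefore not proved.
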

\begin{proof}
Follows straightforward by the projection formulas.
\end{proof}

\begin{prop}\label{norm-like and powers} Let $F\colon \Sp^H \to \Sp^G$ be a norm-like functor. Then
\begin{enumerate}
\item For any $K\subset G$ the functor $\Phi^KF\colon \Sp^H \to \Sp$ is $q_H(G/K)$-homogeneous and $q_H(G/K)$-coreduced.
\item For any $K\subset G$ the functor $E_K\wedge F\colon \Sp^H \to \Sp^G$ is $q_H(G/K)$-homogeneous and $q_H(G/K)$-coreduced.
\end{enumerate}
\end{prop}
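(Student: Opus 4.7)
The plan is to prove part~(1) directly from the three conditions defining a norm-like functor, and then deduce part~(2) by applying $\Phi^{K'}$ for each $K'\subset G$ and invoking the joint conservativity of geometric fixed points (Proposition~\ref{jointly conservative}). Fix $K\subset G$ and set $n=q_H(G/K)$. For $n$-excisiveness of $\Phi^K F$, use that $\Phi^K$ preserves finite homotopy limits and all homotopy colimits, so it commutes with the formation of cross-effects: $cr_m(\Phi^K F)\simeq \Phi^K(cr_m F)$. By condition~(1), $cr_m F\simeq \bigvee_j \Ind_{K_j}^G Y_j$ with $|K_j\backslash G/H|\geq m$. The standard fact that $\Phi^K$ annihilates induction from any proper subgroup of $K$, combined with the Mackey double coset decomposition~\ref{double coset formula spectra}, shows $\Phi^K\Ind_{K_j}^G Y_j\simeq \ast$ unless some conjugate satisfies $g^{-1}Kg\subset K_j$; that inclusion forces $|K\backslash G/H|\geq |K_j\backslash G/H|$, hence $|K_j\backslash G/H|\leq n$. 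So every summand vanishes whenever $m>n$, giving $cr_m(\Phi^K F)\simeq \ast$ and therefore $\Phi^K F$ is $n$-excisive.

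Next, condition~(2) gives $cr_n(\Phi^K F)(X,\ldots,X)\simeq (\Sigma_n)_+\wedge \Phi^K F(X)$ as Borel $\Sigma_n$-spectra, and Goodwillie's classification yields $D_n(\Phi^K F)(X)\simeq cr_n(\Phi^K F)(X,\ldots,X)_{h\Sigma_n}\simeq \Phi^K F(X)$. Under the identification of condition~(2), the fold map on cross-effects corresponds to the canonical collapse $(\Sigma_n)_+\wedge \Phi^K F(X)\to \Phi^K F(X)$, which becomes the identity after $\Sigma_n$-orbits; hence the structural map $D_n\Phi^K F\to P_n\Phi^K F\simeq \Phi^K F$ is an equivalence, forcing $P_{n-1}\Phi^K F\simeq \ast$. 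Thus $\Phi^K F$ is $n$-homogeneous, and applying Corollary~\ref{cohomogeneous criteria} with the $\Sigma_n$-induced structure of condition~(2) gives $n$-coreducedness.

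For part~(2), the identity $\Phi^{K'}(E_K\wedge F)\simeq (E_K)^{K'}\wedge \Phi^{K'}F$ equals $\Phi^K F$ when $K'\sim_G K$ and is contractible otherwise. Combined with Lemma~\ref{Phidetectsexcisiviness} and part~(1), $E_K\wedge F$ is $n$-excisive. Since $P_{n-1}$ is assembled from finite homotopy limits and filtered hocolims, $\Phi^{K'}$ commutes with $P_{n-1}$, and Proposition~\ref{jointly conservative} forces $P_{n-1}(E_K\wedge F)\simeq \ast$, so $E_K\wedge F$ is $n$-homogeneous. Finally, $E_K\wedge F$ is itself norm-like by Lemma~\ref{norm-like smash and mapping}, so applying condition~(2) to it together with commutation of $\Phi^{K'}$ with the Tate construction shows that $cr_n(E_K\wedge F)(X,\ldots,X)$ is $\Sigma_n$-induced in $(\Sp^G)^{B\Sigma_n}$; Corollary~\ref{cohomogeneous criteria} then yields $n$-coreducedness.

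The main obstacle is making rigorous, in the $(n-1)$-reduced step of part~(1), the identification of the structural map $D_n\Phi^K F\to P_n\Phi^K F$ with an equivalence: the equivalences $D_n\Phi^K F\simeq \Phi^K F\simeq P_n\Phi^K F$ are constructed separately (via the cross-effect computation and via $n$-excisiveness), and matching them requires verifying that the $\Sigma_n$-induction isomorphism of condition~(2) is natural with respect to the fold map defining $D_n\to P_n$. A secondary concern is justifying that $\Phi^{K'}$ commutes with the Tate construction on Borel $\Sigma_n$-objects in $\Sp^G$, needed to upgrade $\Sigma_n$-inducedness from the geometric fixed point level to the genuine equivariant level in part~(2).
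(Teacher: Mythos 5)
Your overall strategy is the same as the paper's (compute on geometric fixed points, use the induced cross-effect), but two steps have genuine gaps. In part (1), the inference ``all cross-effects of $\Phi^K F$ vanish above $n$, therefore $\Phi^K F$ is $n$-excisive'' is not valid on its own: vanishing of higher cross-effects does not imply excisiveness without an a priori polynomial bound. (For instance $\tau_{\leq 0}\colon \Sp\to\Sp$ is reduced with all higher cross-effects trivial, yet is not $1$-excisive, since $\Omega\,\tau_{\leq 0}\Sigma X\simeq \tau_{\leq -1}X\not\simeq\tau_{\leq 0}X$.) This is exactly what condition (3) of Definition~\ref{norm-like} is for, and you never use it although you announce all three conditions: since $\Phi^K$ preserves finite homotopy limits, $\Phi^K F$ is $|G/H|$-excisive, and one then descends from $|G/H|$ to $n=q_H(G/K)$ using your cross-effect vanishing (for $n<m\leq |G/H|$ the $m$-th layer of an $m$-excisive functor is the $\Sigma_m$-orbits of the diagonal of $cr_m$, hence trivial). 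With that fix the excisiveness step is fine. For $n$-reducedness you then take a route that you yourself flag as problematic, and the worry is real: an abstract equivalence $D_n\Phi^K F\simeq \Phi^K F$ does not force $P_{n-1}\Phi^K F\simeq\ast$ unless the canonical map $D_n\to P_n$ is identified with it, and condition (2) as stated gives no such compatibility with the fold map. The paper avoids this entirely: condition (2) gives directly $\Phi^K F(X)\simeq \bigl(cr_n\Phi^K F(X,\ldots,X)\bigr)_{h\Sigma_n}\simeq\bigl(cr_n\Phi^K F(X,\ldots,X)\bigr)^{h\Sigma_n}$ (orbits and fixed points of an induced object), and since $cr_n\Phi^K F$ is multilinear once excisiveness is known, Proposition~\ref{diagonal is coreduced} (as in Corollary~\ref{cohomogeneous criteria}) yields both $n$-reducedness and $n$-coreducedness with no comparison of maps.

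In part (2), your treatment of excisiveness and reducedness of $E_K\wedge F$ via $\Phi^{K'}(E_K\wedge F)\simeq (E_K)^{K'}\wedge\Phi^{K'}F$, Lemma~\ref{Phidetectsexcisiviness} and Proposition~\ref{jointly conservative} is correct (the paper instead writes $E_K\wedge F\simeq E_K\wedge\mathrm{triv}_e^G\Phi^K F$), but the coreducedness step is where the genuine difficulty lies and your argument does not close it. Geometric fixed points cannot ``upgrade'' $\Sigma_n$-inducedness or coreducedness from $\Sp$ to the genuine level: $\Phi^{K'}$ commutes only with finite homotopy limits, while $P^{n-1}$, $(-)^{h\Sigma_n}$ and the Tate construction are infinite limits, so your appeal to ``commutation of $\Phi^{K'}$ with the Tate construction'' is unjustified; this is precisely the phenomenon the paper's remark following the proposition warns about. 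Moreover, applying condition (2) to the norm-like functor $E_K\wedge F$ only gives information about the nonequivariant Borel spectra $\Phi^{K'}\bigl(cr_n(E_K\wedge F)\bigr)$, not about $cr_n(E_K\wedge F)(X,\ldots,X)$ as an object of $(\Sp^G)^{B\Sigma_n}$. What the paper does is establish the genuine-level statement that $E_K\wedge cr_nF(X,\ldots,X)$ is itself $\Sigma_n$-induced in $\Sp^G$, by combining property (2) with the identification of $E_K$-smashed spectra through their $K$-geometric fixed points, and only then applies Corollary~\ref{cohomogeneous criteria}; some input of this kind at the level of $\Sp^G$ is needed and is missing from your proposal.
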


\begin{proof}

	By properties 1 and 3, the functor $\Phi^KF$ is $q_H(G/K)$-excisive. Hence, $cr_{q_H(G/K)}\Phi^KF$ is a multilinear functor, and by property 2, we have
	$$\Phi^KF(X)\simeq (cr_{n_K}\Phi^KF(X,\ldots,X))_{h\Sigma_{n_K}}\simeq(cr_{n_K}\Phi^KF(X,\ldots,X))^{h\Sigma_{n_K}},$$
	where we set $n_K=q_H(G/K)$, and $X\in \Sp^H$. Therefore, $\Phi^KF$ is $q_H(G/K)$-homogeneous and also $q_H(G/K)$-coreduced.

	For the last part, we note that $E_K\wedge F\simeq E_K \wedge \mathrm{triv}_e^G \Phi^K(F)$. Hence, $E_K\wedge F$ is $q_H(G/K)$-homogeneous. By property 2, we also have that $E_K\wedge cr_{n_K}F(X,\ldots,X)$ is a $\Sigma_{n_K}$-induced spectrum, where $n_K=q_H(G/K)$. So, $E_K\wedge F$ is also $q_H(G/K)$-coreduced.
\end{proof}

\begin{rmk}
	Notice that if $E_K\wedge F$ is coreduced, then $\Phi^KF$ is also coreduced, but not otherwise in general. It happens because $E_K$ is not a compact $G$-spectrum; therefore the smash product with $E_K$ does not preserves homotopy limits, and so coreduced functors.  

\end{rmk}

Recall that $\F_n$ is the largest family of subgroups in $G$ such that $\delta(\F_n)>n$, i.e. $K\in\F_n \iff |K\backslash G/H|>n$.

\begin{cor}\label{goodwillietruncationnormlike}
	Let $F$ be a norm-like functor. Then $\widetilde{E}\F_n \wedge F$ is $n$-excisive, and $(E\F_n)_{+}\wedge F$ is $(n+1)$-reduced. In particular, the natural map $F \to \widetilde{E}\F_n\wedge F$ is the $n$-th Goodwillie approximation.
\end{cor}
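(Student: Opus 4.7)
The plan is to reduce this statement directly to Corollary~\ref{goodwillietruncation}, using Proposition~\ref{norm-like and powers} as the bridge. The corollary we are after has identical conclusions to Corollary~\ref{goodwillietruncation}; the only difference is that the hypothesis is phrased in terms of the three axioms of Definition~\ref{norm-like} rather than in terms of the homogeneity/coreducedness of the smash products $E_K\wedge F$. So the first and main step is to invoke Proposition~\ref{norm-like and powers}, which asserts that for a norm-like $F$ and any subgroup $K\subset G$, the functor $E_K\wedge F$ is $q_H(G/K)$-homogeneous and $q_H(G/K)$-coreduced. This is precisely the hypothesis required in Lemma~\ref{familiesandexcisiviness}, and consequently in Corollary~\ref{goodwillietruncation}.

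Having verified the hypothesis, Corollary~\ref{goodwillietruncation} immediately yields that $\widetilde{E}\F_n\wedge F$ is $n$-excisive and that $(E\F_n)_{+}\wedge F$ is $(n+1)$-reduced, which are the first two claims. For the final assertion, I would argue as follows. The cofiber sequence
\[
(E\F_n)_{+}\wedge F \to F \to \widetilde{E}\F_n\wedge F
\]
is also a fiber sequence since $\Sp^G$ is stable. Applying the functor $P_n$ (which, being the universal $n$-excisive approximation in a stable setting, preserves fiber sequences) and using that $P_n$ annihilates the $(n+1)$-reduced term $(E\F_n)_+\wedge F$ while fixing the $n$-excisive term $\widetilde{E}\F_n\wedge F$, we obtain
\[
P_n F \simeq P_n(\widetilde{E}\F_n\wedge F) \simeq \widetilde{E}\F_n\wedge F,
\]
and the canonical map $F\to P_nF$ factors through the projection $F\to \widetilde{E}\F_n\wedge F$ as claimed.

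Since all the genuinely nontrivial work (verifying the two conditions on $E_K\wedge F$) was already accomplished in Proposition~\ref{norm-like and powers}, there is no real obstacle here; the entire argument is a bookkeeping exercise combining two earlier results with the universal property of $P_n$. The only small subtlety is being careful that the fiber sequence above really is sent by $P_n$ to a fiber sequence, which holds because $\Sp^G$ is stable and $P_n$ commutes with finite homotopy limits up to weak equivalence.
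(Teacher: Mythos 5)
Your proposal is correct and follows the paper's own route: the paper likewise deduces the corollary by feeding the second part of Proposition~\ref{norm-like and powers} (that each $E_K\wedge F$ is $q_H(G/K)$-homogeneous and $q_H(G/K)$-coreduced) into Corollary~\ref{goodwillietruncation}. Your extra paragraph on the cofiber sequence merely spells out the standard argument behind the ``in particular'' clause already contained in Corollary~\ref{goodwillietruncation}, so nothing further is needed.
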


\begin{proof}
Follows by Corollary~\ref{goodwillietruncation} and the last part of 
Proposition~\ref{norm-like and powers}.
\end{proof}


\begin{lmm}\label{goodwilliecotruncationnormlike}
Let $F\colon \Sp^H \to \Sp^G$ be norm-like. Then $F(\widetilde{E}\F_{k},F)$ is $k$-excisive. In particular, the natural transformation $L_kF \to F((E\F_{k})_{+},F)$ is an equivalence.
\end{lmm}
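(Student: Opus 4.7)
The plan is to prove by downward induction on families $\F\supseteq \F_k$ that $F(\widetilde{E}\F,F(-))$ is $k$-excisive, and then extract the equivalence $L_kF\simeq F((E\F_k)_+,F)$ formally. The base case $\F=\F^{\mathrm{all}}$ is automatic because $\widetilde{E}\F^{\mathrm{all}}\simeq\ast$. For the inductive step I would pick a conjugacy class $[K]\subset \F\setminus\F_k$ that is maximal in $\F$ (such a class exists, because any element of $\F$ strictly containing a class of $\F\setminus\F_k$ is itself not in $\F_k$ by the combinatorial inequality $q_H(G/K')\leq q_H(G/K)$ whenever $K\subseteq K'$). Setting $\F'=\F\setminus[K]$, the cofiber sequence $E_K\to \widetilde{E}\F'\to \widetilde{E}\F$ yields a fiber sequence
$$F(\widetilde{E}\F,F)\longrightarrow F(\widetilde{E}\F',F)\longrightarrow F(E_K,F),$$
and since $k$-excisive functors are closed under fiber sequences in a stable category, the induction reduces to showing that $F(E_K,F)$ is $q_H(G/K)$-excisive, which is $\leq k$ because $K\notin \F_k$.

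To prove this sub-claim I would compute cross-effects. The functor $F(E_K,-)\colon \Sp^G\to \Sp^G$ is right adjoint to $E_K\wedge(-)$, hence linear, so it commutes with the homotopy limits of Definition~\ref{crosseffect}, and
$$cr_nF(E_K,F(X_1,\ldots,X_n))\simeq F(E_K,cr_nF(X_1,\ldots,X_n)).$$
By property (1) of norm-likeness, $cr_nF$ splits as a finite wedge of terms $\Ind_{K_j}^GY_j$ with $q_H(G/K_j)\geq n$. Using the Wirthmüller equivalence $\Ind_{K_j}^G\simeq \Coind_{K_j}^G$ (valid for finite groups acting on spectra) and the Frobenius-type isomorphism $F(A,\Coind_{K_j}^GY)\simeq \Coind_{K_j}^GF(\Res_{K_j}^GA,Y)$,
$$F(E_K,\Ind_{K_j}^GY_j)\simeq \Ind_{K_j}^GF(\Res_{K_j}^GE_K,Y_j).$$
For $n>q_H(G/K)$, the strict inequality $q_H(G/K_j)>q_H(G/K)$ forbids any $G$-subconjugacy $K\hookrightarrow gK_jg^{-1}$, since an inclusion $K\subseteq gK_jg^{-1}$ would refine the double coset set and reverse the inequality. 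Therefore $\Res_{K_j}^GE_K$ has all its $K_j$-fixed points contractible, hence is $K_j$-weakly null, and the whole term vanishes. Since $F$ is $|G/H|$-excisive by property (3) of norm-likeness and $F(E_K,-)$ is linear, $F(E_K,F)$ is itself $|G/H|$-excisive; the vanishing of all $cr_mF(E_K,F)$ for $m>q_H(G/K)$ then truncates its (finite) Taylor tower and shows it is $q_H(G/K)$-excisive, completing the induction.

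For the second assertion, Lemma~\ref{familiesandexcisiviness}(3) (whose hypothesis holds via Proposition~\ref{norm-like and powers}) shows that $F((E\F_k)_+,F)$ is $(k+1)$-coreduced. Applying $P^k$, which preserves fiber sequences since it is the coreflection of all homotopy functors onto the $k$-excisive ones, to the fiber sequence $F(\widetilde{E}\F_k,F)\to F\to F((E\F_k)_+,F)$ yields an equivalence $P^kF(\widetilde{E}\F_k,F)\simeq P^kF$. Combining this with the $k$-excisiveness of $F(\widetilde{E}\F_k,F)$ just established (which identifies it with its own $P^k$) gives $F(\widetilde{E}\F_k,F)\simeq P^kF$ naturally over $F$. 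Taking cofibers of the two equivalent maps then produces the claimed equivalence $L_kF\simeq F((E\F_k)_+,F)$.

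The main obstacle is the sub-claim that $F(E_K,F)$ is $q_H(G/K)$-excisive. It requires interweaving three pieces of data: the cross-effect splitting encoded in the norm-like definition, the Frobenius/Wirthmüller identity for mapping spectra, and the purely combinatorial bound $q_H(G/K_j)\leq q_H(G/K)$ that results from any $G$-subconjugacy $K\subseteq gK_jg^{-1}$.
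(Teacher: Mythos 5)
Your argument is correct, and its engine is the same as the paper's: compute cross-effects of the cotensored functor, split them via condition (1) of norm-likeness, move the mapping spectrum inside the wedge using the Frobenius/Wirthm\"uller identity $F(A,\Ind_{K_j}^G Y_j)\simeq \Ind_{K_j}^G F(\Res^G_{K_j}A, Y_j)$, kill the terms by contractibility of the restricted space, and use condition (3) to truncate the finite Taylor tower; your treatment of the second assertion (coreducedness of $F((E\F_k)_+,F)$ via Lemma~\ref{familiesandexcisiviness}(3) and Proposition~\ref{norm-like and powers}, then applying $P^k$ to the fiber sequence and taking cofibers) is exactly the paper's one-line citation, spelled out. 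The genuine difference is that you interpose a downward induction over families, peeling off one maximal conjugacy class at a time and reducing to the sub-claim that $F(E_K,F)$ is $q_H(G/K)$-excisive; this forces you to prove the more delicate vanishing statement that $\Res^G_{K_j}E_K$ is $K_j$-contractible when $q_H(G/K_j)>q_H(G/K)$, which you justify correctly by the subconjugacy argument (an embedding $K\subseteq gK_jg^{-1}$ would give $q_H(G/K)\geq q_H(G/K_j)$). The paper avoids this scaffolding entirely: it runs the identical cross-effect computation directly on $F(\widetilde{E}\F_k,F)$, where the needed contractibility of $\Res^G_{K_j}\widetilde{E}\F_k$ for $i>k$ is immediate because $|K_j\backslash G/H|\geq i>k$ forces $K_j\in\F_k$ and families are closed under subgroups. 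So your route buys nothing extra here beyond the (correct) intermediate statement about $F(E_K,F)$, at the cost of the family induction and the subconjugacy lemma; the direct computation is shorter and is what you should extract from your own sub-claim by replacing $E_K$ with $\widetilde{E}\F_k$.
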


\begin{proof}
	Let denote $F(\widetilde{E}\F_{k},F)$ by $\widetilde{P}^{k}F$. It is enough to show that the cross-effects $cr_i\widetilde{P}^kF(X_1,\ldots,X_i)$ are all zeros for all $X_1,\ldots X_i\in \Sp^H$ and $i\geq k$. By the first condition in Definition~\ref{norm-like}, we have
$$cr_i\widetilde{P}^kF(X_1,\ldots,X_i) \simeq \bigvee_{j=1}^m F(\widetilde{E}\F_{k},\mathrm{Ind}_{K_j}^G Y_j) \simeq  \bigvee_{j=1}^m \mathrm{Ind}_{K_j}^G F(\mathrm{Res}^G_{K_j}\widetilde{E}\F_{k}, Y_j).$$
Since $|K_j\backslash G/H|\geq i> k$, $K_j\in \F_k$, and $\mathrm{Res}^G_{K_j}\widetilde{E}\F_k$ are contractible for all $1\leq j\leq m$. The last part follows from the third part of Lemma~\ref{familiesandexcisiviness}.
\end{proof}



\begin{prop}\label{norm is norm-like}
The norm functor $N_H^G\colon \Sp^H \to \Sp^G$ is norm-like. 
\end{prop}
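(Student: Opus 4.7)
The plan is to verify each of the three conditions of Definition~\ref{norm-like} for $F = N_H^G$ in turn.

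Condition (1) should follow immediately from Corollary~\ref{cross-effect of norm}: the $i$-th cross-effect is identified with $\bigvee_{[\lambda]\in \mathcal{P}^{surj}_i} \Ind_{G_\lambda}^G \widetilde{F}_\lambda(X_1,\ldots,X_i)$, and for each surjective $\lambda$ the induced map $G_\lambda\backslash G/H \to [i]$ remains surjective, so $|G_\lambda\backslash G/H|\geq i$, exhibiting each wedge summand in the required form with $K_j = G_{\lambda_j}$ and $Y_j = \widetilde{F}_{\lambda_j}(X_1,\ldots,X_i)$.

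For conditions (2) and (3), the key input is the pointwise natural identification
$$\Phi^K N_H^G(X) \simeq \bigwedge_{g \in K\backslash G/H} \Phi^{H_g} \Res^H_{H_g} X,$$
obtained by applying $\Phi^K$ to the Mackey double coset formula~\eqref{double coset formula spectra}, using that $\Phi^K$ is symmetric monoidal and satisfies $\Phi^K N_L^K Y \simeq \Phi^L Y$ for any subgroup $L\subset K$. Each factor on the right is linear in $X$, so $\Phi^K N_H^G$ is a $q_H(G/K)$-fold smash of linear functors, hence $q_H(G/K)$-excisive; since $q_H(G/K) \leq |G/H|$ for every $K$, Lemma~\ref{Phidetectsexcisiviness} then delivers condition (3).

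For condition (2), set $n = n_K = q_H(G/K)$ and expand the above identification on a wedge:
$$\Phi^K N_H^G(X_1 \vee \cdots \vee X_n) \simeq \bigvee_{f\colon K\backslash G/H \to [n]} \bigwedge_{g} \Phi^{H_g} \Res^H_{H_g} X_{f(g)}.$$
Since $\Phi^K N_H^G$ is $n$-excisive and its linear factors are reduced, the $n$-th cross-effect picks out exactly the wedge summands indexed by bijections $f$; setting $X_1 = \cdots = X_n = X$ makes each such summand equal to $\Phi^K N_H^G(X)$, and the natural $\Sigma_n$-action by permuting the $X_i$ permutes the set of bijections freely by post-composition, yielding $(\Sigma_n)_+ \wedge \Phi^K N_H^G(X)$ as Borel $\Sigma_n$-spectra, as required.

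The hardest step is the careful derivation of the pointwise linear-smash formula for $\Phi^K N_H^G$ from the Mackey formula and the norm--geometric-fixed-points identity $\Phi^K N_L^K \simeq \Phi^L$; these are standard (cf.~\cite[Section~B.10]{HHR16}), but one must verify that the resulting equivalences are natural in $X$ and compatible with the $\Sigma_n$-permutation action before the cross-effect computation proceeds routinely.
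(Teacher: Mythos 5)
Your proposal is correct and, for conditions (1) and (2) of Definition~\ref{norm-like}, follows essentially the paper's own argument: condition (1) is read off from Corollary~\ref{cross-effect of norm} (using the observation, already recorded before that corollary, that surjectivity of $\lambda$ forces $|G_{\lambda}\backslash G/H|\geq i$), and condition (2) is verified exactly as in the paper by expanding the double coset formula~\ref{double coset formula spectra} over maps $K\backslash G/H\to[n_K]$, so that the $n_K$-th cross-effect of $\Phi^K N_H^G$ is the wedge indexed by bijections and collapses, after setting all variables equal, to $(\Sigma_{n_K})_{+}\wedge\Phi^K N_H^G(X)$ with the free permutation action. Where you genuinely differ is condition (3), together with the intermediate simplification $\Phi^K N^{K}_{H_g}\Res^H_{H_g}\simeq\Phi^{H_g}\Res^H_{H_g}$: the paper keeps the expression $\Phi^K\big(\bigwedge_{g} N^{K}_{H_g}\Res^H_{H_g}\big)$ and justifies $|G/H|$-excisiveness by appealing to Corollary~\ref{goodwillietruncation}, whereas you deduce it directly from the fact that $\Phi^K N_H^G$ is a $q_H(G/K)$-fold smash of linear functors (hence $q_H(G/K)$-excisive) combined with Lemma~\ref{Phidetectsexcisiviness} and $q_H(G/K)\leq|G/H|$. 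Your route is arguably more self-contained: Corollary~\ref{goodwillietruncation} as stated presupposes the hypotheses of Lemma~\ref{familiesandexcisiviness} on $E_K\wedge N_H^G$, which the paper only establishes (Proposition~\ref{norm-like and powers}) assuming norm-likeness, so the direct geometric-fixed-points argument avoids any appearance of circularity at the cost of invoking two standard inputs you should make explicit: the monoidality of $\Phi^K$ together with the norm diagonal formula $\Phi^K N^K_L\simeq\Phi^L$ of Hill--Hopkins--Ravenel, and the fact that the diagonal of a multilinear functor is $n$-excisive.
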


\begin{proof}
We will check three conditions of Definition~\ref{norm-like}. The first one follows from Corollary~\ref{cross-effect of norm}. 
The last one follows for $N_H^G$ from Corollary~\ref{goodwillietruncation}. 

Let us check the second condition. Let $K$ be a subgroup of $G$ and set $n_K=q_H(G/K)=|K\backslash G/H|$. By formula~\ref{double coset formula spectra}, we have
$$\Phi^KN_H^G\simeq\Phi^K\Res^G_KN_H^G\simeq \Phi^K\bigg(\bigwedge_{g\in K\backslash G/H}N^K_{H_g}\Res^H_{H_g}\bigg). $$
Let denote by $\mathcal{I}$ the set of bijections between $H\backslash G/K$ and $[n_K]$. By the formula above we get that the $n_K$-th cross-effect $cr_{n_K}\Phi^K(N_H^G)(X_1,\ldots,X_{n_K})$ is 
$$\bigvee_{\lambda\in\mathcal{I}} \Phi^K\bigg(\bigwedge_{g\in K\backslash G/H}N^K_{H_g}\Res^H_{H_g}X_{\lambda(g)}\bigg). $$
We substitute $X_1=\ldots=X_n = X$ and identify $\mathcal{I}$ with $\Sigma_{n_K}$. So, we obtain that $cr_{n_K}\Phi^K(N_H^G)(X,\ldots,X)$ is equivalent to
\reqnomode
\begin{align*}
(\Sigma_{n_K})_{+}\wedge \Phi^K\bigg(\bigwedge_{g\in K\backslash G/H}N^K_{H_g}\Res^H_{H_g}X\bigg)
&\simeq(\Sigma_{n_K})_{+}\wedge \Phi^K\Res^G_KN^G_HX \\
&\simeq (\Sigma_{n_K})_{+}\wedge\Phi^KN_H^GX. \tag*{\qedhere} 
\end{align*}
\leqnomode
\end{proof}

Now, we are ready to prove the main theorem.
\begin{thm}\label{maintheorem}
Let $I=[k;n]$ be an interval in $\N$.
\begin{enumerate}
	\item The functor $L_{q_H^{-1}(I)}\circ N_H^G$ is $n$-excisive and $k$-coreduced.
	\item The natural transformation $L_IN_H^G \to L_{q_H^{-1}(I)}\circ N_H^G$ is an equivalence.
	\item Moreover, for intervals $I_1,\ldots, I_j\subset \N$ the natural transformation 
	$$L_{I_1}L_{I_2}\cdots L_{I_j}N_H^G \to L_{q_H^{-1}(I_1)}\circ \ldots \circ L_{q_H^{-1}(I_j)}\circ N_H^G$$
	is an equivalence.
\end{enumerate}
\end{thm}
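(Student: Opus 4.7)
The plan is to first make the endofunctor $L_{q_H^{-1}(I)}$ on $\Sp^G$ explicit. For $I=[k;n]\subset \N$, I would decompose
\[q_H^{-1}([k;n]) \;=\; q_H^{-1}([k;|G/H|]) \setminus q_H^{-1}([n+1;|G/H|]),\]
which is a presentation $q_H^{-1}(I)=I'\setminus I''$ with $G/e\in I'$, and with $I''$ either empty (when $n\geq |G/H|$) or containing $G/e$. Since $q_H$ preserves order, the associated families of subgroups are $\F_{I'}=\F_{k-1}$ and $\F_{I''}=\F_n$ in the notation of Corollary~\ref{goodwillietruncation} (using the conventions that $\F_{-1}$ is the family of all subgroups, under which $(E\F_{-1})_+\simeq S^0$, and $\F_n=\emptyset$ when $n\geq|G/H|$, under which $\widetilde{E}\F_n\simeq S^0$). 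Plugging these into the definition gives
\[L_{q_H^{-1}(I)}(X) \;\simeq\; F((E\F_{k-1})_+,\;\widetilde{E}\F_n\wedge X).\]

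For part (2), I would invoke Corollary~\ref{goodwillietruncationnormlike} to identify $P_nN_H^G$ with $\widetilde{E}\F_n\wedge N_H^G$. Lemma~\ref{norm-like smash and mapping} then guarantees that this smashed functor is still norm-like, so Lemma~\ref{goodwilliecotruncationnormlike} applies to yield
\[L_{k-1}(P_nN_H^G) \;\simeq\; F((E\F_{k-1})_+,\;\widetilde{E}\F_n\wedge N_H^G),\]
whose left-hand side is $L_IN_H^G$ by definition and whose right-hand side is $L_{q_H^{-1}(I)}(N_H^G)$ by the first paragraph; this settles (2). Part (1) then follows immediately from the explicit formula: $n$-excisiveness is preserved under post-composition with the limit-preserving linear functor $F((E\F_{k-1})_+,-)$ applied to the $n$-excisive $P_nN_H^G$, while $k$-coreducedness follows from Lemma~\ref{familiesandexcisiviness}(3) with $\F=\F_{k-1}$, whose hypothesis is furnished by Proposition~\ref{norm-like and powers}(2) for the norm-like functor $\widetilde{E}\F_n\wedge N_H^G$, together with the observation $\delta(\F_{k-1})\geq k$.

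For part (3) I would induct on $j$. The inductive step needs $L_{q_H^{-1}(I_2)}\circ\cdots\circ L_{q_H^{-1}(I_j)}\circ N_H^G$ to remain norm-like; this follows by iterating Lemma~\ref{norm-like smash and mapping}, since each $L_{q_H^{-1}(I_i)}$ is built from smashing with, and mapping from, $G$-CW complexes. The argument used for (2) — which only uses that $N_H^G$ is norm-like, never anything specific about the norm functor — then applies verbatim to peel off $L_{I_1}$, producing $L_{q_H^{-1}(I_1)}$ precomposed with the output of the inductive hypothesis.

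The main obstacle is really just the bookkeeping of the interval decomposition $q_H^{-1}(I)=I'\setminus I''$ and the matching of families $\F_{I'}$, $\F_{I''}$ with $\F_{k-1}$, $\F_n$ across the edge cases $k=0$ and $n\geq |G/H|$. Once this dictionary is in place, all the substantive homotopy-theoretic input has already been absorbed into Corollary~\ref{goodwillietruncationnormlike} and Lemmas~\ref{goodwilliecotruncationnormlike} and~\ref{norm-like smash and mapping}, and the theorem collapses to formal manipulation.
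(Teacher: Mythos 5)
Your proof is correct and follows essentially the same route as the paper's: the paper likewise deduces parts (1) and (2) from Proposition~\ref{norm is norm-like} together with Corollary~\ref{goodwillietruncationnormlike} and Lemma~\ref{goodwilliecotruncationnormlike}, and part (3) by induction on the number of intervals using that $L_{q_H^{-1}(I)}$ has the form $F(A,B\wedge-)$ via Lemma~\ref{norm-like smash and mapping}. The only difference is that you spell out the dictionary $q_H^{-1}([k;n])=I'\setminus I''$ with $\F_{I'}=\F_{k-1}$, $\F_{I''}=\F_n$ (including the edge cases), which the paper leaves implicit.
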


\begin{proof}
By Proposition~\ref{norm is norm-like}, the norm functor is norm-like and the first two parts follow from Corollary~\ref{goodwillietruncationnormlike} and Lemma~\ref{goodwilliecotruncationnormlike}. 

The last statement follows by induction on the number of intervals and the fact that $L_{q_H^{-1}(I)}$ has the form $F(A,B\wedge -)$,
see Lemma~\ref{norm-like smash and mapping}. 
\end{proof}

\begin{cor}\label{fracture square comparasing}
Let $k< m \leq n \in \N$, and set $I=[k;n], I_2=[k;m-1], I_1=[m;n]$, $I=I_1\sqcup I_2$. Then the fracture square of Proposition~\ref{fracturesquaregoodwillie} 
$$
\begin{tikzcd}
L_IN_H^G \arrow{r} \arrow{d} &
L_{I_1}N_H^G \arrow{d}\\
L_{I_2}N_H^G \arrow{r} &
L_{I_2}L_{I_1}N_H^G
\end{tikzcd}
$$
applied to the norm functor $N_H^G\colon \Sp^H \to \Sp^G$ is naturally equivalent to the fracture square of Proposition~\ref{fracturesquareequivariant} postcomposed with $N_H^G$:
$$
\begin{gathered}[b]
\begin{tikzcd}
L_{q^{-1}_H(I)}\circ N_H^G \arrow{r} \arrow{d} &
L_{q^{-1}_H(I_1)}\circ N_H^G \arrow{d}\\
L_{q^{-1}_H(I_2)}\circ N_H^G \arrow{r} &
L_{q^{-1}_H(I_2)}L_{q^{-1}_H(I_1)}\circ N_H^G.
\end{tikzcd}\\[-\dp\strutbox]
\end{gathered}\eqno\qed
$$
\end{cor}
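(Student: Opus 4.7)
The plan is to observe that each vertex of the Goodwillie fracture square in Proposition~\ref{fracturesquaregoodwillie}, applied to $N_H^G$, is a composition of at most two functors of the form $L_J$ (for $J$ an interval in $\mathbb{N}$) postcomposed with $N_H^G$. Specifically, the four corners are $L_I N_H^G$, $L_{I_1} N_H^G$, $L_{I_2} N_H^G$, and $L_{I_2} L_{I_1} N_H^G$. By Theorem~\ref{maintheorem}(3), each such composition admits a natural equivalence to the corresponding composition of equivariant localization functors $L_{q_H^{-1}(-)}$ evaluated on $N_H^G$; this gives a vertex-by-vertex identification with the target square.

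Next I would verify that these vertex-wise equivalences are compatible with the horizontal and vertical maps of the square. The maps in the Goodwillie square are the unit or counit transformations coming from the definitions of $L_J = L_{k-1} P_n$ (or $P_n$ when $k = 0$), and under the equivalence of Theorem~\ref{maintheorem} these correspond to the natural maps between the equivariant $L_{q_H^{-1}(J)}\circ N_H^G$ induced by the inclusions of the relevant families of subgroups. Naturality of the equivalence in Theorem~\ref{maintheorem}(3), together with the universal characterization of $P_n$ and $P^n$ (Theorem~\ref{existence of left adjoint} and Corollary~\ref{existence of right adjoint}), forces the identification to be compatible with the four arrows of the square.

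In other words, the corollary is essentially a formal consequence of Theorem~\ref{maintheorem}(3): once one knows that compositions of Goodwillie localizations applied to $N_H^G$ coincide naturally with compositions of equivariant localizations postcomposed with $N_H^G$, the square of Proposition~\ref{fracturesquaregoodwillie} and the square of Proposition~\ref{fracturesquareequivariant} (with $N_H^G$ plugged in) agree term by term and map by map. Since both squares are homotopy cartesian (by Propositions~\ref{fracturesquaregoodwillie} and~\ref{fracturesquareequivariant}), the natural equivalence of vertices extends to a natural equivalence of squares.

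The only mildly subtle point — and the step I would be most careful about — is the naturality of the identifications with respect to the structure maps of the square. Theorem~\ref{maintheorem} states the comparison as an equivalence of endofunctors, but one must check that the canonical transformations $L_I F \to L_{I'} F$ (for a nested pair $I \supset I'$) and the analogous transformations between the $L_{q_H^{-1}(\cdot)}$ are intertwined by this equivalence. This follows from the fact that both sets of transformations are uniquely determined by the same universal properties (the equivariant ones by the fibrations in the defining formula $L_I(X) = F((E\F_{I'})_+, \widetilde{E}\F_{I''}\wedge X)$, the Goodwillie ones by Theorem~\ref{existence of left adjoint} and Corollary~\ref{existence of right adjoint}), so there is no room for a nontrivial discrepancy.
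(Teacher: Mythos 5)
Your proof is correct and follows the same route as the paper, which marks the corollary with a \qed precisely because it is an immediate consequence of Theorem~\ref{maintheorem}(3): the comparison maps there are specific natural transformations $L_{I_1}\cdots L_{I_j}N_H^G \to L_{q_H^{-1}(I_1)}\circ\cdots\circ L_{q_H^{-1}(I_j)}\circ N_H^G$, so compatibility with the arrows of the two squares is automatic by construction, and your extra care about naturality (while reasonable) is already built into the statement of the theorem.
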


\section{Examples}\label{examples}
\begin{exmp}\label{C_p} Let $G=C_p$ be a cyclic group on $p$-elements, $p$ is a prime number, and let $H=\{e\}$ be the trivial subgroup. Then the norm functor $N_e^{C_p}\colon \Sp\to \Sp^{C_p}$ is $p$-excisive, and by Theorem~\ref{maintheorem} we obtain $P_{p-1}N_e^{C_p}=P_1N_e^{C_p}=\widetilde{E}C_p\wedge N_e^{C_p}$.  
There exists only one non-trivial fracture square, which looks as follows:
\begin{equation}
\begin{tikzcd}
N_e^{C_p} \arrow{r} \arrow{d} &
F((EC_p)_{+},N_e^{C_p}) \arrow{d}\\
\widetilde{E}C_p\wedge N_e^{C_p} \arrow{r}&
\widetilde{E}C_p\wedge F((EC_p)_{+},N_e^{C_p}).
\end{tikzcd}
\end{equation}

In this case, one can identify the spectrum $\widetilde{E}C_p\wedge N_e^{C_p}X$ with $\widetilde{E}C_p \wedge \mathrm{triv}_e^{C_p}X$ and $F((EC_p)_{+},N_e^{C_p}X)$ with the cofree Borel equivariant $C_p$-spectrum isomorphic to $X^{\wedge p}$ where $C_p$ acts by permutations. In particular, we obtain that $(N^{C_p}_eX)^{C_p}\simeq (X^{\wedge p})^{hC_p} \times_{(X^{\wedge p})^{t C_p}}X$. Here, $(-)^{t C_p}$ is the Tate fixed points, and $X \to (X^{\wedge p})^{tC_p}$ is the Tate diagonal.
\end{exmp}

\begin{exmp}\label{C_p^k} Let $p$ be a prime number as before, and let $G=C_{p^k}$ be a cyclic group with $p^k$-elements. The norm functor $N_e^{C_{p^k}}$ is $p^k$-excisive, and by Theorem~\ref{maintheorem} $P_{p^k-1}N_e^{C_{p^k}} = P_{p^{k-1}}N_e^{C_{p^k}}=\widetilde{E}C_{p^k}\wedge N_e^{C_{p^k}}$. The top fracture square (i.e. containing the map $P_{p^k} \to P_{p^k-1}$) in this case looks as follows:
	\begin{equation}\label{topsquareC_pk}
\begin{tikzcd}
	N_e^{C_{p^k}} \arrow{r} \arrow{d} &
	F((EC_{p^k})_{+},N_e^{C_{p^k}}) \arrow{d}\\
	\widetilde{E}C_{p^k}\wedge N_e^{C_{p^k}} \arrow{r}&
	\widetilde{E}C_{p^k}\wedge F((EC_{p^k})_{+},N_e^{C_{p^k}}).
\end{tikzcd}
\end{equation}
Any other square can be obtain inductively from this one. Namely, the square which contains the map $P_{p^m} \to P_{p^m-1}$ is square~\ref{topsquareC_pk} with $k=m$ postcomposed with $\widetilde{E}\calC_m \wedge \mathrm{triv}_{C_{p^m}}^{C_{p^k}}$. Here, $K\in \calC_m$ if and only if $|K|<p^{k-m}$.
\end{exmp}

\begin{exmp}\label{dividedpowers}
	Let denote by $\Gamma^n\colon \Sp \to \Sp$ the functor given by $X\mapsto (X^{\wedge n})^{h\Sigma_n}$. Suppose first that $X\simeq \Sigma^{\infty}_{+}Y$, where $Y$ is finite CW-complex. Using the Segal conjecture together with the tom Dieck splitting, one can obtain that $\Gamma^n(X)$ splits as follows:
	$$\Gamma^n(X) \simeq \bigvee_{K\subset \Sigma_n} \bigg({\big(X^{\wedge ([n]/K)}\big)}^{\wedge}_{p(K)}\bigg)_{hW_K}.$$
	Here the wedge is taken over all conjugancy classes of subgroups $K\subset \Sigma_n$ such that $|K|=p^l$ for some prime $p$ and natural $l$, and $W_K=N_{\Sigma_n}(K)/K$ is the Weyl group of $K$. Here, we set $p(K)=p$, if $|K|=p^l$, and $p(\{e\})=1$. 

The splitting above shows that the Goodwillie tower for $\Gamma^n$ is simple (even splits) after restriction on finite suspension spectra. However, in general, this tower is hard to describe.  Theorem~\ref{maintheorem} allows us to present the (at least a part of) Goodwillie tower and the associated fracture squares for $\Gamma^n\colon \Sp \to \Sp$. 

	The functor $\Gamma^n$ can be factorized as follows: $$\Gamma^n(X)\simeq \big(F((E\Sigma_n)_{+}, N_{\Sigma_{n-1}}^{\Sigma_n}\mathrm{triv}_e^{\Sigma_{n-1}}X)\big)^{\Sigma_n}.$$ Note that functors $(-)^{\Sigma_n}$ and $\mathrm{triv}_{e}^{\Sigma_{n-1}}$ are linear and commute with filtered homotopy limits and colimits. Hence, the Goodwillie tower and the associated fracture squares for the functor $\Gamma^n$ can be obtained from the Goodwillie tower and the associated fracture squares for the functor $F((E\Sigma_n)_{+}, N_{\Sigma_{n-1}}^{\Sigma_n}(-))$ by postcomposing with $(-)^{\Sigma_n}$ and precomposing with $\mathrm{triv}_{e}^{\Sigma_{n-1}}$.

	Recall that $\F_k$ is a family of subgroups in $\Sigma_n$ such that $K\in\F_k$ if and only if $|[n]/K|>k$, i.e. $K$-action on the $n$-element set by permutations has more than $k$ orbits. The functor $F((E\Sigma_n)_{+}, N_{\Sigma_{n-1}}^{\Sigma_n}(-))$ is norm-like (and naturally equivalent to $L_{n-1}N_{\Sigma_{n-1}}^{\Sigma_{n}}$.) Therefore, by Theorem~\ref{maintheorem}, 
	$$P_kF((E\Sigma_n)_{+}, N_{\Sigma_{n-1}}^{\Sigma_n}(-))\simeq \widetilde{E}\F_k\wedge F((E\Sigma_n)_{+}, N_{\Sigma_{n-1}}^{\Sigma_n}(-)).$$
	Hence, the Goodwillie truncations of $\Gamma^n$ can be described by the following long formula 
	$$P_k\Gamma^n \simeq  \big(\widetilde{E}\F_k\wedge F((E\Sigma_n)_{+}, N_{\Sigma_{n-1}}^{\Sigma_n}\mathrm{triv}_e^{\Sigma_{n-1}}(-))\big)^{\Sigma_n}.$$
	Let us denote the composition $F((E\Sigma_n)_{+}, N_{\Sigma_{n-1}}^{\Sigma_n}\mathrm{triv}_e^{\Sigma_{n-1}}(-))$ by $G\colon \Sp \to \Sp^{\Sigma_n}$. Notice that $G(X)$ is simply the cofree $\Sigma_n$-Borel equivariant spectrum $X^{\wedge n}$. Then by Theorem~\ref{maintheorem}, the following diagram is homotopy cartesian:
$$
\begin{tikzcd}
	P_k\Gamma^n \arrow{r}\arrow{d} &
	\big(F((E\F_{k-1})_{+}, \widetilde{E}\F_{k}\wedge G)\big)^{\Sigma_n} \arrow{d}\\
	P_{k-1}\Gamma^n \arrow{r} &
	\big(\widetilde{E}\F_{k-1}\wedge F((E\F_{k-1})_{+}, \widetilde{E}\F_{k}\wedge G)\big)^{\Sigma_n}.
\end{tikzcd}
$$

\end{exmp}
\medskip \noindent
{\bf Acknowledgements.} I would like to thank Mark Behrens for turning my mind to the problem and reading a draft of this paper.

\bibliographystyle{alpha}
\bibliography{sample}

\end{document}